\newtheorem{theorem}{Theorem}[section] 
\newtheorem{lemma}[theorem]{Lemma}     
\newtheorem{corollary}[theorem]{Corollary}
\newtheorem{proposition}[theorem]{Proposition}
\newtheorem{definition}[theorem]{Definition}
\newtheorem{example}[theorem]{Example}
\journal{Journal of Mathematical Analysis and Applications}
\begin{document}

\begin{frontmatter}


\title{Positive Operator Valued Measures and Feller Markov Kernels
}
 \author{R. Beneduci
\fnref{label2}}



 \address[label2]{roberto.beneduci@unical.it}


\address{Dipartimento di Fisica, Universit\'a della Calabria\\ and\\ Istituto Nazionale di Fisica Nucleare gruppo c. Cosenza,\\ via P. Bucci cubo 30-B, 87036 Arcavacata di Rende (Cosenza) Italy}

\begin{abstract}
\noindent

A Positive Operator Valued Measure (POVM) is a map $F:\mathcal{B}(X)\to\mathcal{L}_s^+(\mathcal{H})$ from the Borel $\sigma$-algebra of a topological space $X$ to the space of positive self-adjoint operators on a Hilbert space $\mathcal{H}$. We assume $X$ to be  Hausdorff, locally compact and second countable and prove that a POVM $F$ is commutative if and only if it is the smearing of a spectral measure $E$ by means of a Feller Markov kernel. Moreover, we prove that the smearing can be realized by means of a strong Feller Markov kernel if and only if $F$ is uniformly continuous. Finally, we prove that a POVM which is norm bounded by a finite measure $\nu$ admits a strong Feller Markov kernel. 

That provides a characterization of the smearing which connects a commutative POVM $F$ to a spectral measure $E$ and is relevant both from the mathematical and the physical viewpoint since smearings of spectral measures form a large and very relevant subclass of POVMs: they are paradigmatic for the modeling of certain standard forms of noise in quantum measurements, they provide optimal approximators as marginals in joint measurements of incompatible observables \cite{Busch}, they are important for a range of quantum information processing protocols, where classical post-processing plays a role \cite{Heinosaari}.

The mathematical and physical relevance of the results is discussed and particular emphasis is given to the connections between the Markov kernel and the imprecision of the measurement process.


\end{abstract}

\begin{keyword}
Positive Operator Valued Measures\sep Feller Markov kernels\sep $C^*$-algebras\sep von Neumann algebras\sep Quantum observables\sep Quantum Measurement



\end{keyword}

\end{frontmatter}


\section{Introduction}
A Positive operator Valued measure (or POVM) is a map $F:\mathcal{B}(X)\to\mathcal{L}_s^+(\mathcal{H})$ from the Borel $\sigma$-algebra of a topological space $X$ to the space of positive self-adjoint operators on a Hilbert space $\mathcal{H}$. In the present paper we assume $X$ to be Hausdorff, locally compact and second countable. If $F(\Delta)$ is a projection operator for each $\Delta\in\mathcal{B}(X)$, $F$ is called Projection Valued measure (or PVM). If $X=\mathbb{R}$ we have real POVMs (or semispectral measures) and real PVMs (or spectral measures) respectively. Therefore, the set of PVMs is a subset of the set of POVMs and the set of spectral measures is a subset of the set of semispectral measures. Moreover, spectral measures are in one-to-one correspondence with self-adjoint operators (spectral theorem) and are used in standard quantum mechanics to represent quantum observables. It was pointed out \cite{Ali,Bush,Davies,Holevo1,Prugovecki,Schroeck} that POVMs are more suitable than spectral measures in representing quantum observables. The quantum observables described by POVMs are called generalized observables or unsharp observables and play a key role in quantum information theory, quantum optics, quantum estimation theory \cite{Bush,Helstrom,Holevo1,Schroeck1} and in the phase space formulation of quantum mechanics \cite{Prugovecki,Schroeck1,Guz,Davies,B9,B10}. It is then natural to ask what are the relationships between POVMs and spectral measures. A clear answer can be given in the commutative case \cite{Ali,B0,B1,B2,B3,B4,B5,B6,B7,B8,Holevo,P}. Indeed \cite{B1,P}, a POVM $F$ is commutative if and only if there exist a bounded self-adjoint operator $A$ and a Markov kernel (transition probability) $\mu_{(\cdot)}(\cdot):\sigma(A)\times\mathcal{B}(X)\to[0,1]$ such that 
$$F(\Delta)=\int_{\sigma(A)}\mu_{\Delta}(\lambda)\,dE_{\lambda}$$
\noindent
 where, $E$ is the spectral measure corresponding to $A$. 
In other words, $F$ is a smearing of the spectral measure $E$ corresponding to $A$.

 Smearings of spectral measures form a large and very relevant subclass of POVMs and are paradigmatic for the modeling of certain standard forms of noise in measurements. They also provide optimal approximators as marginals in joint measurements of incompatible observables (for example, for position and momentum) as shown by Busch, Lahti, Werner in Ref. \cite{Busch}. Moreover, they are important for a range of quantum information processing protocols, where classical post-processing plays a role \cite{Heinosaari}.  
 Another relevant application of commutative POVMs is the smearing of incompatible observables in order to get compatible observables (see \cite{Busch1,B11}). All that explains the relevance of commutative POVMs both form the mathematical and the physical viewpoint. As a notable example we analyze (section \ref{PositionMomentum}) the unsharp position and momentum observables which are the marginals of a joint position momentum observable (see \cite{Schroeck,Bush,Busch}). 

Although, it is well known that $F$ can be interpreted as the smearing of $E$, no characterization of the smearing (the Markov kernel) is known. In the present paper such a characterization is given and its mathematical and physical implications are analyzed. That also provides a stronger characterization of commutative POVMs by means of Feller Markov kernels.

In order to outline some of the problems we deal with, it is helpful to consider the unsharp position observable in the interval $[0,1]$. It can be represented as follows. 
\begin{align}\label{PositionExample}
\langle\psi,Q^f(\Delta)\psi\rangle&:=\int_{[0,1]}\mu_{\Delta}(\lambda)\,d\langle\psi,Q_\lambda\psi\rangle,\quad\Delta\in\mathcal{B}(\mathbb{R}),\quad\psi\in L^2([0,1]),\\
\mu_{\Delta}(\lambda)&:=\int_{\mathbb{R}}\chi_{\Delta}(\lambda-y)\, f(y)\,dy,\quad \lambda\in[0,1]\notag
\end{align}
\noindent
where, $f$ is a positive, bounded, Borel function such that $f(y)=0$, $y\notin [0,1]$, and $\int_{[0,1]} f(y)dy=1$, while $Q_\lambda$ is the spectral measure corresponding to the position operator
\begin{align*}
Q:L^2([0,1])&\to L^2([0,1])\\
(Q\psi)(x)&:=x\psi(x)
\end{align*}
\noindent
for almost all $x\in[0,1]$. We recall that $\langle\psi,Q(\Delta)\psi\rangle$ is interpreted as the probability that a perfectly accurate measurement (sharp measurement) of the position gives a result in $\Delta$.
\noindent
Then, a possible interpretation of equation (\ref{PositionExample}) is that $Q^f$ is a randomization of $Q$. Indeed \cite{Prugovecki}, the outcomes of the measurement of the position of a particle depend on the measurement imprecision\footnote{There are other possible interpretations of the randomization. For example, it could be due to the existence of a no-detection probability depending on hidden variables \cite{Garola}.} so that, if the sharp value of the outcome of the measurement of $Q$ is $\lambda$ then the apparatus produces with probability $\mu_{\Delta}(\lambda)$ a reading in $\Delta$. 

It is worth noting that (see example \ref{example2} in section 5) the Markov kernel 
$$\mu_{\Delta}(\lambda):=\int_{\mathbb{R}}\chi_{\Delta}(\lambda-y)\, f(y)\,dy,\quad \lambda\in[0,1]$$
\noindent
 in equation (\ref{PositionExample}) above is such that the function $\lambda\mapsto\mu_{\Delta}(\lambda)$ is continuous for each $\Delta\in\mathcal{B}(\mathbb{R})$. The continuity of $\mu_{\Delta}$ means that if two sharp values $\lambda$ and $\lambda'$ are very close to each other then, the corresponding random diffusions are very similar, i.e., the probability to get a result in $\Delta$ if the sharp value is $\lambda$ is very close to the probability to get a result in $\Delta$ if the sharp value is $\lambda'$. That is quite common in important  physical applications and  seems to be reasonable from the physical viewpoint. It is then natural to look for general conditions which ensure the continuity of $\lambda\mapsto\mu_{\Delta}(\lambda)$. That is one of the aims of the present work. What we prove is that, in general, the continuity does not hold for all the Borel sets $\Delta$ but only for a ring of subsets which generates the Borel $\sigma$-algebra $\mathcal{B}(X)$. (Anyway, that is sufficient to prove the weak convergence of $\mu_{(\cdot)}(\lambda)$ to $\mu_{(\cdot)}(\lambda')$.) We also prove that the continuity for each Borel set is equivalent to the uniform continuity of $F$ which in its turn is equivalent to require that the smearing in equation (\ref{PositionExample}) can be realized by a strong Feller Markov kernel.

 It is our opinion that, in the real case, the continuity of $\mu_{\Delta}$ over a ring $\mathcal{R}$ which generates the Borel $\sigma$-algebra of the reals could be helpful in dealing with problems connected to the characterization of functions of the kind 

$$G_f(\lambda)=\int_{\mathbb{R}} f(t)\,\mu_{dt}(\lambda).$$ 

\noindent
A similar (but less general) problem  arises in Ref. \cite{B6} where the relationships between  Naimark extension theorem and the characterization of commutative POVMs as smearing of spectral measures are analyzed.
That is a second motivation for the analysis of the continuity properties of $\mu_{\Delta}$. 

The results outlined above are a consequence of the two main theorems of the present work. 

\noindent
The first is a characterization of the smearing which connects a commutative POVM to a real PVM. In particular, we show (see theorems \ref{weak}) that a POVM is commutative if and only if  there exist a spectral measure $E$ and a  Feller Markov kernel $\mu_{(\cdot)}(\cdot):\Gamma\times\mathcal{B}(X)\to[0,1]$, $\Gamma\subset\sigma(A)$, $E(\Gamma)=\mathbf{1}$, such that 

\begin{equation}\label{Feller}
 F(\Delta)=\int_{\Gamma}\mu_{\Delta}(\lambda)\,dE_{\lambda}
\end{equation}

\noindent
and $\mu_{\Delta}(\cdot)$ is continuous for each $\Delta\in\mathcal{R}$ where, $\mathcal{R}\subset\mathcal{B}(X)$ is a ring which generates the Borel $\sigma$-algebra $\mathcal{B}(X)$ and $A$ is the self-adjoint operator corresponding to $E$. 
Therefore, $F$ is commutative if and only if there exists a Feller Markov kernel $\mu$ such that equation (\ref{Feller}) is satisfied. See section \ref{4} for the definition of Feller Markov kernel. That provides a new and stronger characterization of commutative POVMs. 

\noindent 
We also prove that the family of functions $\{\mu_{\Delta}\}_{\Delta\in\mathcal{B}(X)}$ separates the points of $\sigma(A)$ up to a null set (see theorems \ref{separate}, and \ref{weak}). In other words, the probability measures $\mu_{(\cdot)}(\lambda)$ and $\mu_{(\cdot)}(\lambda')$ which represent the randomizations corresponding to the sharp values $\lambda$ and $\lambda'$, $\lambda\neq\lambda'$, are different. 


The second theorem is a characterization of the POVMs which admit a strong Feller Markov kernel, i.e., a Markov kernel $\mu$ such that the function $\lambda\mapsto\mu_{\Delta}(\lambda)$ is continuous for each $\Delta\in\mathcal{B}(X)$. In particular, we prove (see theorem \ref{uni}) that a POVM $F$ admits a strong Feller Markov kernel if and only if it is uniformly continuous. 
As an example, we develop the details for the unsharp position observable defined in equation (\ref{PositionExample}) above. Finally, we prove (see section 6) that a POVM $F$ which is norm bounded by a regular finite measure $\nu$ is uniformly continuous (theorem \ref{abs}). We give some examples of POVMs that are norm bounded by regular measures (see example \ref{PositionExample3}) and analyze the unsharp position observable which is obtained as the marginal of a phase space observable (see section 6.1).

\section{Some preliminaries about POVMs}
\noindent
In what follows, we denote by $\mathcal{B}(X)$ the Borel $\sigma$-algebra of a topological space $X$, by $\textbf{0}$ and $\textbf{1}$ the null and the identity operators, by $\mathcal{L}_s(\mathcal{H})$ the space of all bounded self-adjoint linear operators acting in a Hilbert space $\mathcal{H}$ with scalar product $\langle\cdot,\cdot\rangle$, by $\mathcal{L}_s^+(\mathcal{H})$ the subspace of all positive, bounded self-adjoint operators on $\mathcal{H}$, by $\mathcal{E}(\mathcal{H})\subset\mathcal{L}_s^+(\mathcal{H})$ the subspace of all projection operators on $\mathcal{H}$. We use the symbol $\mathcal{C}(\Lambda)$ to denote the algebra of continuous functions on $\Lambda$.

\begin{definition}
\label{POV}
A Positive Operator Valued measure (for short, POVM) is a map $F:\mathcal{B}(X)\to\mathcal{L}_s^+(\mathcal{H})$
such that:
    \begin{equation*}
    F\big(\bigcup_{n=1}^{\infty}\Delta_n\big)=\sum_{n=1}^{\infty}F(\Delta_n).
    \end{equation*}
    \noindent 
 where, $\{\Delta_n\}$ is a countable family of disjoint
    sets in $\mathcal{B}(X)$ and the series converges in the weak operator topology. It is said to be normalized if 
\begin{equation*}   
    F(X)={\bf{1}}
\end{equation*}
\end{definition}    
\begin{definition}
    A POVM is said to be commutative if
    \begin{equation}
    \big[F(\Delta_1),F(\Delta_2)\big]={\bf{0}},\,\,\,\,\forall\,\Delta_1\,,\Delta_2\in\mathcal{B}(X).
    \end{equation}
    \end{definition}

   \begin{definition}
   A POVM is said to be orthogonal if
    \begin{equation}
    F(\Delta_1)F(\Delta_2)={\bf{0}}\,\,\,\hbox{if}\,\,\Delta_1\cap\Delta_2=
    \emptyset.
    \end{equation}
\end{definition}
\begin{definition}
A Projection Valued measure (for short, PVM) is an orthogonal, normalized POVM.
\end{definition}
\noindent
It is simple to see that for a PVM $E$, we have $E(\Delta)=E(\Delta)^2$, for any $\Delta \in \mathcal{B}(X)$. Then, $E(\Delta)$ is a projection operator for every $\Delta\in\mathcal{B}(X)$, and the PVM is a map $E:\mathcal{B}(X)\to\mathcal{E}(\mathcal{H})$.

In quantum mechanics, non-orthogonal normalized POVMs are also called \textbf{generalised} or \textbf{unsharp} observables and PVMs \textbf{standard} or \textbf{sharp} observables. 
\noindent
\begin{definition}\label{spectrum}
The spectrum $\sigma(F)$ of a \textit{POVM} $F$ is the set of points $x\in X$ such that $F(\Delta)\neq\mathbf{0}$, for any open set $\Delta$ containing $x$.
\end{definition}
\noindent
The spectrum $\sigma(F)$ of a POVM $F$ is a closed set since its complement $X-\sigma(F)$ is the union of all the open sets $\Delta\subset X$ such that $F(\Delta)=\mathbf{0}$. 

A spectral measure is a real PVM, i.e., a PVM $E$ such that $\sigma(E)\subset\mathbb{R}$.

\noindent

\begin{definition}
The von Neumann algebra $\mathcal{A}^W(F)$ generated by the POVM $F$ is the von Neumann algebra generated by the set $\{F(\Delta)\}_{\Delta\in\mathcal{B}(X)}$. 
\end{definition}

In the following we use the symbols $w-\lim$ and $u-\lim$ to denote the limit in the weak operator topology and the limit in the uniform operator topology respectively.

\begin{definition}
 A POVM is regular if for every $\Delta\in\mathcal{B}(X)$,
\begin{align*}
F(\Delta)&=GLB\big\{F(G)\,:\,\Delta\subset G,\,\, G\in\mathcal{B}(X),\,\, G\,\,\,\,\text{open}\big\}\\
F(\Delta)&=LUB\big\{F(C)\,:\,C\subset\Delta,\,\, C\in\mathcal{B}(X),\,\, C\,\,\,\,\text{compact}\big\}
\end{align*}

\end{definition}
\noindent
\begin{proposition}
A POVM defined on a Hausdorff locally compact, second countable space $X$ is regular. 
\end{proposition}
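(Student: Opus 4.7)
The plan is to reduce regularity of the POVM $F$ to regularity of the associated scalar measures $\mu_\psi(\Delta):=\langle\psi,F(\Delta)\psi\rangle$, and then to upgrade from pointwise-in-$\psi$ approximation to the common approximating sequences demanded by the definition.

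First I would observe that for every $\psi\in\mathcal{H}$ the set function $\mu_\psi$ is a finite positive Borel measure on $X$: finiteness follows from $\mu_\psi(X)=\|\psi\|^2\cdot\langle\hat{\psi},F(X)\hat{\psi}\rangle<\infty$ and $\sigma$-additivity from the weak $\sigma$-additivity of $F$. Since $X$ is Hausdorff, locally compact and second countable, it is metrizable (Urysohn) and $\sigma$-compact, hence Polish. By the standard Radon property for finite Borel measures on Polish (or, equivalently, $\sigma$-compact metrizable) spaces, each $\mu_\psi$ is inner regular by compact sets and outer regular by open sets. In particular, for every $\Delta\in\mathcal{B}(X)$ and every $\epsilon>0$ there is an open $U\supset\Delta$ with $\mu_\psi(U\setminus\Delta)<\epsilon$ and a compact $K\subset\Delta$ with $\mu_\psi(\Delta\setminus K)<\epsilon$.

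Next I would produce the common approximating sequences via a diagonal argument, using a countable dense subset $\{\psi_n\}_{n\in\mathbb{N}}\subset\mathcal{H}$ (separability of $\mathcal{H}$ being the usual standing assumption in this setting). Fix $\Delta\in\mathcal{B}(X)$ and construct the decreasing open sequence inductively: set $G_0=X$ and, at step $j$, for each $n\le j$ pick by outer regularity of $\mu_{\psi_n}$ an open $V_n^{(j)}\supset\Delta$ with $\mu_{\psi_n}(V_n^{(j)}\setminus\Delta)<1/j$, and define
\begin{equation*}
G_j:=G_{j-1}\cap\bigcap_{n=1}^{j}V_n^{(j)}.
\end{equation*}
Then $G_j$ is open, $\Delta\subset G_j\subset G_{j-1}$, and for every fixed $n$ one has $\mu_{\psi_n}(G_j\setminus\Delta)<1/j$ for all $j\ge n$, so $\mu_{\psi_n}(G_j)\to\mu_{\psi_n}(\Delta)$. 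Dually, using inner regularity of each $\mu_{\psi_n}$, one picks compacts $K_i^{(n)}\subset\Delta$ with $\mu_{\psi_n}(\Delta\setminus K_i^{(n)})<1/(i\,2^n)$ and sets $O_j:=\bigcup_{i\le j}\bigcup_{n\le i}K_i^{(n)}$, which is compact, contained in $\Delta$, increasing in $j$, and satisfies $\mu_{\psi_n}(O_j)\to\mu_{\psi_n}(\Delta)$ for every $n$.

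Finally I would pass from the scalar approximation to the operator-level conclusion. The sequence $\{F(G_j)\}$ is a monotone decreasing family of positive bounded self-adjoint operators bounded below by $F(\Delta)$, hence (by Vigier's theorem) it converges in the strong, and a fortiori weak, operator topology to some $T\in\mathcal{L}_s^+(\mathcal{H})$ with $T\ge F(\Delta)$. By construction $\langle\psi_n,T\psi_n\rangle=\lim_j\mu_{\psi_n}(G_j)=\mu_{\psi_n}(\Delta)=\langle\psi_n,F(\Delta)\psi_n\rangle$ for every $n$; since $T-F(\Delta)\ge\mathbf{0}$ is bounded self-adjoint and its quadratic form vanishes on the dense set $\{\psi_n\}$, polarization and continuity give $T=F(\Delta)$. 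The same argument applied to the increasing sequence $\{F(O_j)\}$ (monotone increasing, bounded above by $F(\Delta)$) yields $w\mathrm{-}\lim_j F(O_j)=F(\Delta)$. The main obstacle throughout is precisely the uniformity-in-$\psi$ step: the regularity of each individual $\mu_\psi$ is classical, but manufacturing sequences $\{G_j\},\{O_j\}$ that serve every $\psi$ simultaneously is what forces the diagonal construction together with separability of $\mathcal{H}$, and this is the point of the argument that must be handled carefully.
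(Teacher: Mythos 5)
Your proof is correct, but it takes a genuinely different route from the paper. The paper's proof is a chain of reductions to cited results: a locally compact Hausdorff space is topologically regular, so by Urysohn's metrization theorem the second countable space $X$ is metrizable; second countability gives $\sigma$-compactness; in a metrizable $\sigma$-compact space the Borel and Baire sets coincide; and Berberian's theorem that every Baire POVM is regular then finishes the argument. You instead prove regularity directly: each scalar measure $\langle\psi,F(\cdot)\psi\rangle$ is a finite Borel measure on a $\sigma$-compact metrizable (Polish) space, hence outer regular by open sets and inner regular by compacts; a diagonal construction over a countable dense set $\{\psi_n\}\subset\mathcal{H}$ produces one decreasing sequence of open sets $G_j\supset\Delta$ and one increasing sequence of compacts $O_j\subset\Delta$ that work for all $\psi_n$ simultaneously; and the operator-level step is handled correctly via monotonicity of $j\mapsto F(G_j)$ and $j\mapsto F(O_j)$, Vigier's theorem, and the observation that a bounded positive operator whose (continuous) quadratic form vanishes on a dense set is zero. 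What the paper's route buys is brevity and no hypothesis on $\mathcal{H}$; what yours buys is a self-contained argument that exhibits the approximating sequences explicitly, at the price of assuming $\mathcal{H}$ separable --- an assumption the paper never states, though it is standard in this setting, and some countability of this kind is what lets one obtain common approximating \emph{sequences} (rather than nets) for all vectors at once, which is exactly the uniformity issue you correctly identify as the crux.
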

\begin{proof}

Since $X$ is metrizable and $\sigma$-compact, the ring of Borel sets coincides with the ring of Baire sets 
and the thesis comes from the fact that each Baire POVM is regular \cite{Berberian}.

\end{proof}

In what follows, we use the term ``measurable'' for the Borel measurable functions.
For any vector $\psi\in\mathcal{H}$ the map
$$\langle F(\cdot)\psi,\psi\rangle \,:\,\mathcal{B}(X)\to {\mathbb R} ,
\qquad
\Delta \mapsto \langle F(\Delta)\psi,\psi\rangle,$$
is a measure. 
 In the following we will use the symbol $d\langle F_{\lambda}\psi,\psi\rangle$ to mean integration with respect to the measure $\langle F(\cdot)\psi,\psi\rangle$.
\noindent
We shall say that a measurable function $f:X\to\mathbb{R}$, is almost everywhere (a.e.) one-to-one with respect to a POVM $F$ if it is one-to-one on a subset $N\subset X$ such that $X-N$ is a null set with respect to $F$.
We shall say that a function $f: X\to{\mathbb R}$ is bounded with respect to a POVM $F$, if it is equal to a bounded function $g$ a.e. with respect to $F$, that is, if $f=g$ a.e. with respect to the measure $\langle F(\cdot)\psi,\psi\rangle$,  $\forall \psi \in \mathcal{H}$.
\noindent
For any real, bounded and measurable function $f$ and for any POVM $F$, there is a unique \cite{Berberian} bounded self-adjoint operator $B\in\mathcal{L}_s(\mathcal{H})$ such that
\begin{equation}
\label{6}
\langle B\psi,\psi\rangle=\int_X f(\lambda)d\langle F_{\lambda}\psi,\psi\rangle,\quad\text{for each}\quad \psi\in\mathcal{H}.
\end{equation}
If equation (\ref{6}) is satisfied, we write $B=\int f(\lambda)dF_{\lambda}$ or $B=\int f(\lambda)F(d\lambda)$ equivalently. In the case of a function $f$ which is not bounded with respect to $F$, integration can still be defined but in general it gives a symmetric operator and not a self-adjoint operator (see Ref. \cite{Lahti} for the details). 

By the spectral theorem, there is a one-to-one correspondence between real PVMs  $E$ and self-adjoint operators $B$,
the correspondence being given by
$$B=\int_{\mathbb{R}} \lambda dE^B_{\lambda}.$$
\noindent
Notice that the spectrum of $\sigma(E^B)$ of $E^B$ coincides with the spectrum $\sigma(B)$ of $B$. 
Moreover, in this case a functional calculus can be developed. Indeed, if $f:{\mathbb R}\to{\mathbb R}$ is a measurable real-valued function, we can define the self-adjoint operator 
\begin{equation}\label{functionalcalculus}
f(B)=\int_{\mathbb{R}} f(\lambda) dE^B_{\lambda}
\end{equation}
\noindent
where, $E^B$ is the PVM corresponding to $B$. If $f$ is bounded, then $f(B)$ is bounded. 
Equation (\ref{functionalcalculus}) cannot be extended to the case of non-orthogonal POVMs. 
\noindent

\noindent
In the following we do not distinguish between real PVMs and the corresponding self-adjoint operators.


\noindent
Let $\Lambda$ be a subset of $\mathbb{R}$ and $\mathcal{B}(\Lambda)$ the corresponding Borel $\sigma$-algebra.
\begin{definition}
A real Markov kernel is a map $\mu:\Lambda\times\mathcal{B}(X)\to[0,1]$ such that,
\begin{itemize}
\item[1.] $\mu_{\Delta}(\cdot)$ is a measurable function for each $\Delta\in\mathcal{B}(X)$,
\item[2.] $\mu_{(\cdot)}(\lambda)$ is a probability measure for each $\lambda\in \Lambda$.
\end{itemize}
\end{definition}
\begin{definition}
Let $\nu$ be a measure on $\Lambda$. A map $\mu:\Lambda\times\mathcal{B}(X)\to[0,1]$ is a weak Markov kernel with respect to $\nu$ if:
\begin{itemize}
\item[1.] $\mu_{\Delta}(\cdot)$ is a measurable function for each $\Delta\in\mathcal{B}(X)$,
\item[2.] for every $\Delta\in\mathcal{B}(X)$,\,\,$0\leq\mu_{\Delta}(\lambda)\leq 1$,\quad $\nu-a.e.$,
\item[3.]$\mu_{X}(\lambda)=1$, $\mu_{\emptyset}(\lambda)=0$,\quad $\nu-a.e.$,
\item[4.] for any sequence $\{\Delta_i\}_{i\in\mathbb{N}}$, $\Delta_i\cap\Delta_j=\emptyset$,  
$$\sum_i\mu_{(\Delta_i)}(\lambda)=\mu_{(\cup_i\Delta_i)}(\lambda),\quad \nu-a.e.$$
\end{itemize}
\end{definition}
\begin{definition}
The map $\mu:\Lambda\times\mathcal{B}(X)\to[0,1]$ is a weak Markov kernel with respect to a PVM $E:\mathcal{B}(\Lambda)\to\mathcal{E(H)}$ if it is a weak Markov kernel with respect to each measure $\nu^{\psi}(\cdot):=\langle E(\cdot)\,\psi,\psi\rangle$, $\psi\in\mathcal{H}$. 
\end{definition}
\noindent
In the following, by a weak Markov kernel $\mu$ we mean a weak Markov kernel with respect to a PVM $E$. Moreover the function $\lambda\mapsto\mu_{\Delta}(\lambda)$ will be denoted indifferently by $\mu_{\Delta}$ or $\mu_{\Delta}(\cdot)$.    
\begin{definition}
A POVM $F:\mathcal{B}(X)\to\mathcal{L}_s^+(\mathcal{H})$ is said to be a smearing of a POVM $E:\mathcal{B}(\Lambda)\to\mathcal{L}_s^+(\mathcal{H})$ if there exists a weak Markov kernel $\mu:\Lambda\times\mathcal{B}(X)\to[0,1]$ such that,
\begin{equation*}
F(\Delta)=\int_{\Lambda} \mu_{\Delta}(\lambda)d E_{\lambda}, \,\,\,\,\,\,\,\Delta\in\mathcal{B}(X).
\end{equation*}
\end{definition}

\begin{example}
In the standard formulation of quantum mechanics, the operator
\begin{align*}
Q:\mathcal{D}(Q)&\to L^2(\mathbb{R}),\\
(Q\psi)(x)&:=x\psi(x),
\end{align*}
\noindent
for almost all $x\in\mathbb{R}$, with 
$\mathcal{D}(Q)=\{\psi\in L^2(\mathbb{R})\,\vert\,\int_{\mathbb{R}}\, x^2|\psi(x)|^2\,dx<\infty\}$,
\noindent
is used to represent the position observable. A more realistic description of the position observable of a quantum particle is given by a smearing of $Q$ as, for example, the optimal position POVM
\begin{align*}
F^Q(\Delta)&=\frac{1}{l\,\sqrt{2\,\pi}}\int_{-\infty}^{\infty}\Big(\int_{\Delta}e^{-\frac{(\lambda-y)^2}{2\,l^2}}\,d y\Big)\,dE^Q_\lambda=\int_{-\infty}^{\infty}\mu_{\Delta}(\lambda)\,dE^Q_\lambda
\end{align*}
\noindent
where, 
\begin{equation*}
\mu_{\Delta}(\lambda)=\frac{1}{l\,\sqrt{2\,\pi}}\int_{\Delta}e^{-\frac{(\lambda-y)^2}{2\,l^2}}\,dy 
\end{equation*}
\noindent
defines a Markov kernel and $E^Q$ is the spectral measure corresponding to the position operator $Q$.
\end{example}
\noindent
In the following, the symbol $\mu$ is used to denote both  Markov kernels and weak Markov kernels. The symbols $A$ and $B$ are used to denote self-adjoint operators. 
\begin{definition}
Whenever $F$, $A$, and $\mu$ are such that $F(\Delta)=\mu_{\Delta}(A)$, $\Delta\in\mathcal{B}(X)$, we say that $(F,A,\mu)$ is a von Neumann triplet. 
\end{definition}
\noindent
The following theorem establishes a relationship between commutative POVMs and spectral measures and gives a characterization of the former. Other characterizations and an analysis of the relationships between them can be found in Ref.s \cite{Ali,Holevo,Ali5,P1}.
\begin{theorem}[\cite{B1,P}]\label{Cha}
 A POVM $F$ is commutative if and only if there exist a bounded self-adjoint operator $A$ and a Markov kernel (weak Markov kernel) $\mu$ such that $(F,A,\mu)$ is a von Neumann triplet. 
\end{theorem}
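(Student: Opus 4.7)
The forward (``if'') direction is immediate: if $F(\Delta)=\mu_\Delta(A)$ for every $\Delta$, then each $F(\Delta)$ lies in the abelian von Neumann algebra $\{A\}''$ generated by $A$, so $[F(\Delta_1),F(\Delta_2)]=\mathbf{0}$ for all $\Delta_1,\Delta_2$.

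For the converse, my plan is to realize the commutative von Neumann algebra $\mathcal{A}^W(F)$ as a subalgebra of one generated by a single bounded self-adjoint operator $A$, read off the kernel from the bounded Borel functional calculus, and then patch null sets using the second countability of $X$. The first step uses the classical fact that every commutative von Neumann algebra acting on a separable Hilbert space is singly generated: concretely, choose a countable weak-operator-dense family $\{F(\Delta_n)\}\subset\mathcal{A}^W(F)$ (available because $\mathcal{B}(X)$ is countably generated) and set $A:=\sum_n 2^{-n}F(\Delta_n)$, so that $\mathcal{A}^W(F)\subset\{A\}''$. The bounded Borel functional calculus of $A$ then supplies, for each $\Delta\in\mathcal{B}(X)$, a bounded Borel function $f_\Delta:\sigma(A)\to[0,1]$ with
\begin{equation*}
F(\Delta)=f_\Delta(A)=\int_{\sigma(A)}f_\Delta(\lambda)\,dE^A_\lambda .
\end{equation*}

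Next I would verify that $(\lambda,\Delta)\mapsto f_\Delta(\lambda)$ is a weak Markov kernel with respect to $E^A$. The operator bounds $\mathbf{0}\le F(\Delta)\le\mathbf{1}$ yield $0\le f_\Delta\le 1$ $E^A$-a.e.; the normalizations $F(\emptyset)=\mathbf{0}$ and $F(X)=\mathbf{1}$ give $f_\emptyset=0$ and $f_X=1$ $E^A$-a.e.; and the weak-operator $\sigma$-additivity of $F$ is transported by the isometric functional calculus to $\sum_i f_{\Delta_i}=f_{\cup_i\Delta_i}$ $E^A$-a.e.\ for every countable disjoint family $\{\Delta_i\}$. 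This already delivers the weak Markov kernel form of the theorem.

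To upgrade to a genuine Markov kernel, I would fix a countable ring $\mathcal{R}_0\subset\mathcal{B}(X)$ generating $\mathcal{B}(X)$ (which exists because $X$ is Hausdorff, locally compact and second countable) and choose a bounded Borel representative of each $f_\Delta$ for $\Delta\in\mathcal{R}_0$. Since only countably many a.e.\ identities are required (finite additivity and monotone continuity along a countable family on $\mathcal{R}_0$), they hold simultaneously off a common $E^A$-null set $N$. For $\lambda\in\sigma(A)\setminus N$, the assignment $\Delta\mapsto f_\Delta(\lambda)$ is a probability pre-measure on $\mathcal{R}_0$, so Carath\'eodory extension produces a unique probability measure $\mu_{(\cdot)}(\lambda)$ on $\mathcal{B}(X)$; on $N$ define $\mu_{(\cdot)}(\lambda)$ to be any fixed probability measure. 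Measurability of $\lambda\mapsto\mu_\Delta(\lambda)$ for all Borel $\Delta$ follows by a monotone-class argument starting from $\mathcal{R}_0$, and the identity $F(\Delta)=\int_{\sigma(A)}\mu_\Delta(\lambda)\,dE^A_\lambda$ is checked first on $\mathcal{R}_0$ and then extended by $\sigma$-additivity of both sides. The main obstacle is precisely this null-set bookkeeping: one must force the Markov kernel axioms on a single conull set valid for every Borel set, even though the functional calculus only yields each individual identity $E^A$-a.e. The second countability of $X$ (reducing ``all Borel sets'' to a countable generating ring) together with separability of $\mathcal{H}$ (ensuring one generating self-adjoint $A$) is exactly what makes this upgrade tractable.
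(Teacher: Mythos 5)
Your forward direction and the weak-Markov-kernel bookkeeping are fine, but the two load-bearing steps of the converse both have gaps. First, the concrete choice $A:=\sum_n 2^{-n}F(\Delta_n)$ does not guarantee $\mathcal{A}^W(F)\subset\{A\}''$: the $F(\Delta_n)$ are effects, not projections, and weighted sums of commuting positive contractions can lose information (e.g.\ for $B$, $C$ multiplication by $x$ and $y$ on $L^2([0,1]^2)$, no Borel function of $B/2+C/4$ recovers $B$); even for projections, dyadic coefficients $2^{-n}$ fail to separate points because of the two base-$2$ expansions of dyadic rationals. This is exactly where the paper is careful: it replaces the $F(\Delta_i)$ by the countable family $D$ of their spectral projections, identifies the Gelfand spectrum of $\mathcal{A}^C(D)$ with a closed subset of $\prod_i\{0,1\}$, and uses the base-$3$ map $f(\lambda)=\sum_i x_i 3^{-i}$, whose injectivity on $0$--$1$ sequences is what makes $A=\int f\,d\widetilde{E}$ a generator; note also that this route needs only the second countability of $X$, not separability of $\mathcal{H}$, which your appeal to the classical single-generator theorem silently assumes and which is not part of the statement.

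Second, the upgrade from the weak kernel to a genuine Markov kernel is not secured by ``finite additivity and monotone continuity along a countable family on $\mathcal{R}_0$'': countable additivity of $\Delta\mapsto f_\Delta(\lambda)$ on the ring involves uncountably many decreasing sequences (or disjoint decompositions) built from the countable ring, so you cannot sweep all of them into one null set, and finite additivity plus countably many monotone limits does not imply the pre-measure property needed for Carath\'eodory. The standard repair uses the Polish structure of $X$: inner regularity with respect to a countable compact class (or a Borel isomorphism of $X$ with a subset of $[0,1]$) turns finitely many a.e.\ conditions per ring element into genuine $\sigma$-additivity; this is precisely the regular-conditional-probability machinery that the paper invokes by citing the known result that on a Polish space every weak Markov kernel with respect to $E$ can be replaced by a Markov kernel ([P], [P0], [B1]). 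So your outline can be completed, but only after (i) replacing the weighted sum of effects by a sum/encoding of projections (or an honest citation of the single-generator theorem in the right generality) and (ii) inserting the compact-class or Borel-isomorphism argument, or the cited theorem, at the weak-to-strong kernel step.
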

\begin{corollary}\label{smearing}
 A POVM $F$ is commutative if and only if it is a smearing of a real PVM $E$ with bounded spectrum.
\end{corollary}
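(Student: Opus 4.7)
The plan is to derive the corollary directly from Theorem \ref{Cha} together with the spectral theorem, which puts bounded self-adjoint operators into bijective correspondence with real PVMs of bounded spectrum. Since the theorem already gives the hard equivalence between commutativity and existence of a von Neumann triplet $(F,A,\mu)$, the corollary amounts to translating ``bounded self-adjoint operator $A$'' into ``real PVM with bounded spectrum'' on both sides of the biconditional.

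For the forward direction, assume $F$ is commutative. By Theorem \ref{Cha} there exist a bounded self-adjoint operator $A$ and a (weak) Markov kernel $\mu:\sigma(A)\times\mathcal{B}(X)\to[0,1]$ with $F(\Delta)=\mu_\Delta(A)$ for every $\Delta\in\mathcal{B}(X)$. Let $E^A$ be the spectral measure associated with $A$ via the spectral theorem; its spectrum coincides with $\sigma(A)$ and is therefore bounded. Applying the functional calculus \eqref{functionalcalculus} to the bounded measurable function $\mu_\Delta$ yields
\begin{equation*}
F(\Delta)=\mu_\Delta(A)=\int_{\sigma(A)}\mu_\Delta(\lambda)\,dE^A_\lambda,\qquad \Delta\in\mathcal{B}(X),
\end{equation*}
which exhibits $F$ as a smearing of the real PVM $E^A$ with bounded spectrum.

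For the converse, suppose $F$ is a smearing of some real PVM $E:\mathcal{B}(\Lambda)\to\mathcal{E}(\mathcal{H})$ with bounded spectrum via a weak Markov kernel $\mu$. Since $\sigma(E)$ is bounded, the spectral theorem provides a bounded self-adjoint operator $A:=\int_\Lambda\lambda\,dE_\lambda$ whose associated PVM is $E$. Then, again by functional calculus,
\begin{equation*}
F(\Delta)=\int_\Lambda \mu_\Delta(\lambda)\,dE_\lambda=\mu_\Delta(A),
\end{equation*}
so $(F,A,\mu)$ is a von Neumann triplet. Theorem \ref{Cha} then yields that $F$ is commutative.

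The only delicate point is purely bookkeeping: one must be sure that the smearing integral that defines $F$ coincides with $\mu_\Delta(A)$ in the sense of the functional calculus, which is immediate from the spectral theorem since $\mu_\Delta$ is bounded and measurable. No additional work is needed beyond invoking Theorem \ref{Cha} and the standard correspondence between bounded self-adjoint operators and real PVMs of bounded spectrum.
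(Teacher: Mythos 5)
Your proof is correct and follows exactly the route the paper intends: the corollary is stated as an immediate consequence of Theorem \ref{Cha}, obtained by translating ``bounded self-adjoint operator $A$'' into ``real PVM with bounded spectrum'' through the spectral theorem and bounded functional calculus, which is precisely what you do in both directions.
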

\begin{definition}
If $A$ and $F$  in theorem \ref{Cha} generate the same von Neumann algebra then $A$ is named the sharp version of $F$.
\end{definition}
\begin{theorem}\label{unique}\cite{B1}
The sharp version $A$ is unique up to almost everywhere bijections. 
\end{theorem}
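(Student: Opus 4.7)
The plan is to use the fact that two bounded self-adjoint operators which generate the same abelian von Neumann algebra must be Borel functions of each other, and then exploit the uniqueness of the functional calculus to force these functions to be mutually inverse almost everywhere.

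More precisely, suppose $A_1$ and $A_2$ are both sharp versions of $F$. By the definition preceding the theorem, $\mathcal{A}^W(A_1)=\mathcal{A}^W(F)=\mathcal{A}^W(A_2)$. Since $A_2$ lies in the abelian von Neumann algebra $\mathcal{A}^W(A_1)$ generated by $A_1$, the spectral theorem (in its functional calculus form, equation (\ref{functionalcalculus})) yields a bounded Borel function $f:\sigma(A_1)\to\mathbb{R}$ such that $A_2=f(A_1)$; symmetrically there is a bounded Borel $g:\sigma(A_2)\to\mathbb{R}$ with $A_1=g(A_2)$. The ranges are contained in $\sigma(A_2)$ and $\sigma(A_1)$ respectively, because $f(A_1)$ is a self-adjoint operator whose spectrum coincides with the essential range of $f$ under $E^{A_1}$.

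Next I would substitute to get $A_1=g(f(A_1))=(g\circ f)(A_1)$ and $A_2=(f\circ g)(A_2)$. Because the functional calculus $h\mapsto h(A_i)$ is injective modulo equality almost everywhere with respect to the spectral measure $E^{A_i}$, this forces
\[
(g\circ f)(\lambda)=\lambda\quad E^{A_1}\text{-a.e. on }\sigma(A_1),\qquad (f\circ g)(\lambda)=\lambda\quad E^{A_2}\text{-a.e. on }\sigma(A_2).
\]
Restricting $f$ to a Borel set $N'\subset\sigma(A_1)$ on which both $g\circ f=\mathrm{id}$ holds and whose complement is $E^{A_1}$-null, one sees that $f|_{N'}$ is injective and that $f(N')$ coincides $E^{A_2}$-a.e. with $\sigma(A_2)$ (since $g$ recovers every point of $\sigma(A_2)$ outside an $E^{A_2}$-null set). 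Thus $f$ realises the required a.e. bijection between the spectra, in the sense of the definition of ``a.e. one-to-one'' recalled before equation (\ref{6}).

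The delicate point, and the only place where one must be slightly careful, is the interplay of the two null sets: one produced by $E^{A_1}$ when asserting $g\circ f=\mathrm{id}$, and the one produced by $E^{A_2}$ when asserting $f\circ g=\mathrm{id}$. I expect the main obstacle to be verifying that these null sets can be simultaneously removed so that $f$ restricted to the remaining Borel subset of $\sigma(A_1)$ is a genuine Borel bijection onto a Borel subset of $\sigma(A_2)$ whose complements are null with respect to $E^{A_1}$ and $E^{A_2}$ respectively; this should follow from the equality $\mathcal{A}^W(A_1)=\mathcal{A}^W(A_2)$, which ensures that $f_*E^{A_1}$ and $E^{A_2}$ have the same null sets on $\sigma(A_2)$, so any $E^{A_2}$-null exceptional set pulls back under $f$ to an $E^{A_1}$-null set.
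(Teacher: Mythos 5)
Your argument is correct, and it is the natural proof of this statement; note that the paper itself gives no proof here (Theorem \ref{unique} is quoted from Ref.~\cite{B1}), so there is nothing in the text to diverge from. Your route --- both sharp versions generate $\mathcal{A}^W(F)$, hence each is a bounded Borel function of the other, and the composition rule plus the a.e.-injectivity of the Borel functional calculus force $g\circ f=\mathrm{id}$ $E^{A_1}$-a.e.\ and $f\circ g=\mathrm{id}$ $E^{A_2}$-a.e. --- is exactly the standard mechanism, and your handling of the two exceptional null sets is sound because $E^{A_2}=f_*E^{A_1}$ and $E^{A_1}=g_*E^{A_2}$, so the full-measure set $N'$ on which $g\circ f=\mathrm{id}$ pulls back under $g$ to a full-measure subset of $\sigma(A_2)$, giving $E^{A_2}(f(N'))=\mathbf{1}$. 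Two tacit ingredients deserve a word: that every element of $\mathcal{A}^W(A_1)$ is of the form $f(A_1)$ for a bounded Borel $f$ (true in the separable setting the paper works in, and used by the paper itself in the proof of Theorem \ref{separate} via the extended Gelfand homomorphism), and that $f(N')$ is a Borel set so that $E^{A_2}(f(N'))$ makes sense --- this needs the Lusin--Souslin theorem on injective Borel images, the same device the paper invokes in the footnote to Theorem \ref{separate}. With those two standard facts made explicit, your proof is complete.
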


\section{On the separation properties of $\mu$}
\noindent

In the following, we assume $X$ to be Hausdorff, locally compact and second countable. The symbol $\mathcal{S}$ denotes a countable basis for the topology of $X$. The symbol $\mathcal{R(S)}$ denotes the ring generated by $\mathcal{S}$. Notice that $\mathcal{R(S)}$ is countable and generates the Borel $\sigma$-algebra $\mathcal{B}(X)$.

A weak Markov kernel $\mu$ such that $(F,A,\mu)$ is a von Neumann triplet, separates the points of $\Gamma\subset\sigma(A)$ if the family of functions $\{\mu_{\Delta}\}_{\Delta\in\mathcal{B}(X)}$ separates the points of $\Gamma$ or, in other words, if $\lambda\neq\lambda'$ implies $\mu_{(\cdot)}(\lambda)\neq\mu_{(\cdot)}(\lambda')$. It is then natural to ask if in general $\mu$ has that property. The following theorem answers in the positive.
\begin{theorem}\label{separate}
Let $(F,A,\mu)$ be a von Neumann triplet and suppose that $A$ is a sharp version of $F$. Then, there exists a set $\Gamma\subseteq\sigma(A)$, $E^A(\Gamma)=\mathbf{1}$, such that the family of functions $\{\mu_{\Delta}(\cdot)\}_{\Delta\in\mathcal{B}(X)}$ separates the points of $\Gamma$.
\end{theorem}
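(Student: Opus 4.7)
The plan is to use the countable ring $\mathcal{R}(\mathcal{S})$ as an economical generating set for $\mathcal{B}(X)$, then invoke the sharpness of $A$ to produce a bounded measurable function $g$ of $\{\mu_\Delta\}_{\Delta\in\mathcal{R}(\mathcal{S})}$ which coincides with the identity on $\sigma(A)$ up to an $E^A$-null set; the desired separating set $\Gamma$ will be the locus where $g(\lambda)=\lambda$.

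First, enumerate $\mathcal{R}(\mathcal{S})=\{\Delta_n\}_{n\in\mathbb{N}}$ and let $\mathcal{G}$ be the sub-$\sigma$-algebra of $\mathcal{B}(\sigma(A))$ generated by the countable family of measurable functions $\{\mu_{\Delta_n}\}_n$. Since every probability measure on $\mathcal{B}(X)$ is determined by its values on any generating $\pi$-system, two points $\lambda,\lambda'\in\sigma(A)$ are $\mathcal{G}$-indistinguishable exactly when the probability measures $\mu_{(\cdot)}(\lambda)$ and $\mu_{(\cdot)}(\lambda')$ coincide on $\mathcal{B}(X)$.

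Next, I would identify $\mathcal{A}^W(F)$ with the $\mathcal{G}$-measurable functional calculus of $A$. A monotone-class argument, using weak countable additivity of $F$ together with the generation of $\mathcal{B}(X)$ by $\mathcal{R}(\mathcal{S})$, shows that every $F(\Delta)$ already lies in the von Neumann algebra generated by $\{F(\Delta_n)\}_n$, so $\mathcal{A}^W(F)$ coincides with the von Neumann algebra generated by $\{\mu_{\Delta_n}(A)\}_n$. Through the standard correspondence between sub-von Neumann algebras of the abelian algebra $\mathcal{A}^W(A)=\{h(A):h\text{ bounded Borel}\}$ and sub-$\sigma$-algebras of $\mathcal{B}(\sigma(A))$ modulo $E^A$-null sets, this yields $\mathcal{A}^W(F)=\{h(A):h\text{ bounded and }\mathcal{G}\text{-measurable}\}$. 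Sharpness forces $\mathcal{A}^W(F)=\mathcal{A}^W(A)$, so $A\in\mathcal{A}^W(F)$ and there exists a bounded $\mathcal{G}$-measurable $g$ with $g(A)=A$. Rewriting as $\int(\lambda-g(\lambda))\,dE^A_\lambda=\mathbf{0}$ forces $E^A(\{\lambda:g(\lambda)\neq\lambda\})=\mathbf{0}$, so setting $\Gamma:=\{\lambda\in\sigma(A):g(\lambda)=\lambda\}$ yields $E^A(\Gamma)=\mathbf{1}$.

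To conclude, if $\lambda,\lambda'\in\Gamma$ satisfy $\mu_\Delta(\lambda)=\mu_\Delta(\lambda')$ for every $\Delta\in\mathcal{B}(X)$, then in particular for every $\Delta\in\mathcal{R}(\mathcal{S})$, so $\lambda$ and $\lambda'$ are $\mathcal{G}$-indistinguishable and hence $\lambda=g(\lambda)=g(\lambda')=\lambda'$. The main technical subtlety will be the identification of $\mathcal{A}^W(F)$ with the $\mathcal{G}$-measurable functional calculus of $A$, which relies on the standard but delicate correspondence between sub-von Neumann algebras of an abelian von Neumann algebra and sub-$\sigma$-algebras modulo null sets; everything else is essentially bookkeeping, with the sharpness hypothesis providing exactly what is needed to promote the ``modulo null sets'' equality of $\mathcal{G}$ and $\mathcal{B}(\sigma(A))$ to the existence of a genuine separating $\Gamma$ of full $E^A$-measure.
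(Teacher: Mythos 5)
Your argument is correct, but it follows a genuinely different route from the paper's. The paper works on the Gelfand side: it forms the $C^*$-algebra generated by $\{F(\Delta)\}_{\Delta\in\mathcal{R(S)}}$, uses the Gelfand--Naimark isomorphism to represent these operators by continuous functions $\nu_{\Delta_i}$ on the spectrum $\Lambda_2$, invokes Stone--Weierstrass to see that the $\nu_{\Delta_i}$ separate the points of $\Lambda_2$, writes $A=\int h\,d\widetilde E$ for a Borel $h$ (sharpness entering exactly where you use it, to put $A$ in the algebra generated by the $F(\Delta_i)$), identifies $\mu_{\Delta_i}\circ h=\nu_{\Delta_i}$ off a null set $M^c$, and takes $\Gamma=h(M)$ -- at the price of a descriptive-set-theoretic step (Suslin's theorem on the Polish space $\Lambda_2$, with $h$ injective on $M$) to guarantee that $h(M)$ is Borel and of full $E^A$-measure. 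You instead stay entirely inside $L^\infty(E^A)$: you let $\mathcal{G}=\sigma(\mu_{\Delta_n}:n\in\mathbb{N})$, identify $\mathcal{A}^W(F)$ with the $\mathcal{G}$-measurable functional calculus of $A$, use sharpness to produce a bounded $\mathcal{G}$-measurable $g$ with $g(A)=A$, hence $g=\mathrm{id}$ $E^A$-a.e., and take $\Gamma=\{\lambda:g(\lambda)=\lambda\}$, which is manifestly Borel; the separation step is then the triviality that points agreeing on all generators of $\mathcal{G}$ cannot be distinguished by any $\mathcal{G}$-measurable function. This buys you a proof with no Gelfand spectrum, no Stone--Weierstrass and no Suslin theorem, while the paper's construction additionally produces the continuous representatives $\nu_{\Delta}$ that are reused in Theorem \ref{weak} to get the Feller property, which your route does not provide. (Incidentally, your $\pi$-system remark about probability measures is not needed for the conclusion, which is fortunate since $\mu$ in a von Neumann triplet may a priori be only a weak Markov kernel.)

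The one load-bearing step you assert rather than prove is the identification of the von Neumann algebra generated by $\{\mu_{\Delta_n}(A)\}_n$ with $\{h(A):h$ bounded and ($E^A$-a.e. equal to a) $\mathcal{G}$-measurable function$\}$. This is true and can be justified as follows: the inclusion of the right-hand side in $\mathcal{A}^W(F)$ follows from a functional monotone class argument (bounded pointwise limits give WOT limits of $h(A)$), while the reverse inclusion needs the right-hand side to be weak-operator closed, which one gets from the weak*-closedness of $L^\infty(\mathcal{G})$ in $L^\infty(E^A)$ (e.g.\ via conditional expectation with respect to the scalar measures $\langle E^A(\cdot)\psi,\psi\rangle$) together with the normal isomorphism $L^\infty(E^A)\simeq\mathcal{A}^W(A)$; also note that your prior identification $\mathcal{A}^W(A)=\{h(A):h\ \text{bounded Borel}\}$ uses separability of $\mathcal{H}$, the same implicit hypothesis under which the paper extends the Gelfand isomorphism to Borel functions. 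With that step spelled out, your argument is complete; your monotone-class replacement for the paper's Appendix A (regularity) argument that $\{F(\Delta)\}_{\Delta\in\mathcal{R(S)}}$ generates $\mathcal{A}^W(F)$ is also sound.
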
 
\begin{proof}
In the following, $O_2:=\{F(\Delta)\}_{\Delta\in\mathcal{R(S)}}$ and $\mathcal{A}^C(O_2)$ is the $C^*$-algebra generated by $O_2$. The von Neumann algebra generated by $\mathcal{A}^C(O_2)$ coincides with $\mathcal{A}^W(F)$ (see appendix A). Moreover, $\mathcal{A}^W(F)=\mathcal{A}^W(A)$ since $A$ is the sharp version of $F$.
By the Gelfand-Naimark theorem and the spectral theorem for representations of commutative $C^*$-algebras, there is an $*$-isomorphism $\phi$, 
\begin{equation*}
\mathcal{C}(\Lambda_2)\ni f\mapsto\phi(f)=\int_{\Lambda_2} f(\lambda)\,d\widetilde{E}_{\lambda}
\end{equation*}
\noindent
between $\mathcal{C}(\Lambda_2)$ and $\mathcal{A}^C(O_2)$, where, $\Lambda_2$ is the spectrum of $\mathcal{A}^C(O_2)$ and $\widetilde{E}$ is the spectral measure from $\mathcal{B}(\Lambda_2)$ to $\mathcal{E(H)}$ corresponding to $\phi$. 
 The Gelfand-Naimark isomorphism $\phi$ can be extended to a homomorphism between the algebra of the Borel functions on $\Lambda_2$ and the von Neumann algebra $\mathcal{A}^W(A)$. 
Therefore, there is a Borel function $h$ such that 
\begin{equation}
A=\int_{\Lambda_2} h(\lambda)\,d\widetilde{E}_{\lambda}
\end{equation} 
\noindent
Let $\mathcal{S}$ be a countable basis for the topology of $X$. Let $\{\Delta_i\}_{i\in\mathbb{N}}$ denote an enumeration of the set $\mathcal{R(S)}$. Since $\mathcal{A}^C(O_2)$ and $\mathcal{C}(\Lambda_2)$ are *-isomorphic, the set $\{\nu_{\Delta_i}:=\phi^{-1}(F(\Delta_i))\}_{i\in\mathbb{N}}$ generates $\mathcal{C}(\Lambda_2)$ 
and, by the Stone-Weierstrass theorem, it separates the points in $\Lambda_2$. 

\noindent
Moreover, since $(F,A,\mu)$ is a von Neumann triplet, for each $\Delta_i\in\mathcal{R(S)}$, there is a Borel function $\mu_{\Delta_i}$ such that 
\begin{equation*}
\int_{\Lambda_2}\nu_{\Delta_i}(\lambda)\,d\widetilde{E}_{\lambda}=F(\Delta_i)=\mu_{\Delta_i}(A)=\int_{\Lambda_2}\mu_{\Delta_i}(h(\lambda))\,d\widetilde{E}_{\lambda}.
\end{equation*}
\noindent
Then, for each $\Delta_i\in\mathcal{R(S)}$, there is a set $M_i\subset\Lambda_2$, $\widetilde{E}(M_i)=\mathbf{1}$, such that
\begin{equation}
\mu_{\Delta_i}(h(\lambda))=\nu_{\Delta_i}(\lambda),\quad \lambda\in M_i.
\end{equation}
\noindent
Let $M:=\cap_{i=1}^{\infty} M_i$. Then, 
\begin{equation*}
\widetilde{E}(M)=\lim_{n\to\infty}\widetilde{E}(\cap_{i=1}^{n} M_i)=\lim_{n\to\infty}\prod_{i=1}^{n}\widetilde{E}(M_i)=\mathbf{1}
\end{equation*}
\noindent
and, for each $i\in\mathbb{N}$,
\begin{align}\label{2}
(\mu_{\Delta_i}\circ h)(\lambda)=\nu_{\Delta_i}(\lambda),\quad\lambda\in M\subseteq \Lambda_2. 
\end{align}
\noindent
Since $\{\nu_{\Delta_i}\}_{i\in\mathbb{N}}$ separates the points in $\Lambda_2$, 
$\{\mu_{\Delta_i}\}_{i\in\mathbb{N}}$ separates the points in $\Gamma:=h(M)$. 
\noindent
Moreover\footnote{ Notice that $h(M)$ is a Borel set. In order to prove that, we first recall that $\Lambda_2$ is a Polish space (that is, a complete, separable, space). Indeed, by theorem 11, page 871, in Ref. \cite{Dunford}, it is homeomorphic to a closed subspace of the Cartesian product $\prod_{i=1}^{\infty}\sigma(F(\Delta_i))$, where $\sigma(F(\Delta_i))$ is a complete separable metric space, and by theorem 2, page 406, and theorem 6, page 156, in Ref. \cite{Kuratowski1}, it is complete and separable. Moreover, $h$ is measurable and injective on $M$. Therefore, Souslin's theorem assures that $h(M)$ is a Borel set.},

\begin{align*}
E^A(\Gamma)=E^A(h(M))=\widetilde{E}[h^{-1}(h(M))]=\mathbf{1}
\end{align*}
\noindent
where, $E^A$ is the spectral measure defined by the relation
$$E^A(\Delta)=\widetilde{E}(h^{-1}(\Delta))$$
\noindent
and such that,
$$A=\int \lambda\,dE^A_{\lambda}$$
\noindent
while, $h^{-1}(h(M))$ is a Borel set containing $M$.

\noindent
\end{proof}

\section{Characterization of Commutative POVMs by means of Feller Markov kernels}\label{4}
\noindent
In the present section we introduce the concept of strong Markov kernel, i.e., a weak Markov kernel $\mu_{(\cdot)}(\cdot):\Lambda\times\mathcal{B}(X)\to[0,1]$ with respect to a PVM $E:\mathcal{B}(\Lambda)\to\mathcal{E(H)}$ such that $\mu_{(\cdot)}(\lambda)$ is a probability measure for each $\lambda\in \Gamma\subset \Lambda$, $E(\Gamma)=\mathbf{1}$. Then, we prove (theorem \ref{weak}) that 
 $F$ is commutative if and only if there exists  a bounded self-adjoint operator $A$ and a Feller Markov kernel $\mu$ such that 
$$F(\Delta)=\int_{\Gamma}\mu_{\Delta}(\lambda)\,dE_{\lambda}.$$
 Moreover, we prove that there is a ring $\mathcal{R}$ which generates $\mathcal{B}(X)$ such that $\mu_{\Delta}$ is continuous for each $\Delta\in \mathcal{R}$, and the family of functions $\{\mu_{\Delta}\}_{\Delta\in \mathcal{R}}$ separates the points in $\Gamma$  (see theorems \ref{separate} and \ref{weak}).

 In order to prove the main theorem we need the following definitions.
\begin{definition}
Let $E:\mathcal{B}(\Lambda)\to\mathcal{E(H)}$ be a PVM. The map $\mu_{(\cdot)}(\cdot):\Lambda\times\mathcal{B}(X)\to[0,1]$ is a strong Markov kernel with respect to $E$ if it is a weak Markov kernel with respect to $E$ and there exists a set $\Gamma\subset\Lambda$, $E(\Gamma)=\mathbf{1}$, such that $\mu_{(\cdot)}(\cdot):\Gamma\times\mathcal{B}(X)\to[0,1]$ is a Markov kernel. A strong Markov kernel is denoted by the symbol $(\mu, E,\Gamma\subset\Lambda)$.
\end{definition} 
 \begin{definition}
A Feller Markov kernel is a  Markov kernel $\mu_{(\cdot)}(\cdot):\Lambda\times\mathcal{B}(X)\to[0,1]$ such that the function 
$$G(\lambda)=\int_{X}f(x)\,\mu_{dx}(\lambda),\quad\lambda\in\Lambda$$
\noindent
is continuous and bounded whenever $f$ is continuous and bounded. 
\end{definition}
\begin{theorem}\label{weak}
A POVM $F:\mathcal{B}(X)\to\mathcal{F(H)}$ is commutative if and only if, there exists a bounded self-adjoint operator $A=\int \lambda\,dE_\lambda$ with spectrum $\sigma(A)\subset[0,1]$ and a strong Markov Kernel  $(\mu, E, \Gamma\subset\sigma(A))$ 
such that:
\begin{enumerate}
\item[1)] $\mu_{\Delta}(\cdot):\sigma(A)\to[0,1]$ is continuous for each $\Delta\in\mathcal{R(S)}$,
\item[2)] $F(\Delta)=\int_{\Gamma}\mu_{\Delta}(\lambda)\,dE_{\lambda},\quad\Delta\in\mathcal{B}(X)$.
\item[3)] $\mathcal{A}^W(F)=\mathcal{A}^W(A)$.
\item[4)] $\mu$ separates the points in $\Gamma$.
\end{enumerate}
\noindent
Moreover, $\mu:\Gamma\times\mathcal{B}(X)\to[0,1]$ is a Feller Markov kernel.

\end{theorem}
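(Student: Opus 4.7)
The plan is to prove both directions. For $(\Leftarrow)$: given the integral representation $F(\Delta) = \int_\Gamma \mu_\Delta(\lambda)\, dE_\lambda$, the product $F(\Delta_1)F(\Delta_2) = \int \mu_{\Delta_1}\mu_{\Delta_2}\, dE_\lambda$ is symmetric in $\Delta_1, \Delta_2$ because the scalar integrands commute and the Borel functional calculus of a single PVM preserves products, so $F$ is commutative. This direction is routine.

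For $(\Rightarrow)$, assume $F$ commutative. I would invoke Theorem \ref{Cha} together with Theorem \ref{separate}: the former yields a sharp version $A_0$ and a weak Markov kernel $\mu$ with $(F, A_0, \mu)$ a von Neumann triplet, the latter (applied to the $C^*$-algebra $\mathcal{A}^C(O_2)$ generated by $\{F(\Delta_i)\}_{\Delta_i \in \mathcal{R(S)}}$) produces the Gelfand spectrum $\Lambda_2$, continuous point-separating functions $\nu_{\Delta_i} \in \mathcal{C}(\Lambda_2)$, a spectral measure $\widetilde{E}$ on $\Lambda_2$, a Borel map $h: \Lambda_2 \to \sigma(A_0)$ with $A_0 = \int h\, d\widetilde{E}$, and a full-measure subset $M \subseteq \Lambda_2$ on which $\mu_{\Delta_i}\circ h = \nu_{\Delta_i}$, with $\Gamma := h(M)$ satisfying $E^{A_0}(\Gamma) = \mathbf{1}$. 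From this, properties (2), (3), (4) fall out at once, and the weak Markov kernel is upgraded to a strong one by discarding one null set per $\Delta_i \in \mathcal{R(S)}$ (a countable operation, so harmless).

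The essential new content is property (1), continuity of $\mu_\Delta$ on $\sigma(A)$ for every $\Delta \in \mathcal{R(S)}$. Since the map $h$ above is only Borel, I would replace $A_0$ by an operator $A$ chosen inside $\mathcal{A}^C(O_2)$ itself---for instance $A := \sum_i \alpha_i F(\Delta_i)$ for summable positive weights $\alpha_i$---so that the analogue of $h$, namely its Gelfand transform $\hat{A} = \sum \alpha_i \nu_{\Delta_i} \in \mathcal{C}(\Lambda_2)$, is continuous (and, after rescaling, $\sigma(A) \subset [0,1]$). If $\hat{A}$ is injective on a full-$\widetilde{E}$-measure $M$, then $\hat{A}|_M : M \to \Gamma := \hat{A}(M)$ is a homeomorphism by compactness, so $\mu_\Delta := \nu_\Delta \circ \hat{A}^{-1}$ is continuous on $\Gamma$, and Tietze's extension theorem extends it continuously to all of $\sigma(A)$. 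The main obstacle is securing the injectivity of $\hat{A}$: the naive weights $\alpha_i = 2^{-i}$ need not separate points of $\Lambda_2$ even though $\{\nu_{\Delta_i}\}$ does, so one has to argue that a sufficiently generic (e.g.\ rationally independent) choice of weights forces $\hat{A}$ to be injective on at least a full-$\widetilde{E}$-measure subset; this is where the careful exploitation of the point separation from Theorem \ref{separate} is needed.

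Finally, the Feller property on $\Gamma$ follows from property (1) by a portmanteau-style argument: if $\lambda_n \to \lambda_0$ in $\Gamma$, continuity of $\mu_\Delta$ on the $\pi$-system $\mathcal{R(S)}$---which generates $\mathcal{B}(X)$---yields $\mu_\Delta(\lambda_n) \to \mu_\Delta(\lambda_0)$ on a determining class, hence weak convergence $\mu_{(\cdot)}(\lambda_n) \Rightarrow \mu_{(\cdot)}(\lambda_0)$ of the associated probability measures; by the very definition of weak convergence, $\int f\, d\mu_{(\cdot)}(\lambda_n) \to \int f\, d\mu_{(\cdot)}(\lambda_0)$ for every continuous bounded $f: X \to \mathbb{R}$, which is exactly the Feller condition.
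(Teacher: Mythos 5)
There is a genuine gap, and you have put your finger on it yourself: the injectivity of $\hat{A}$ is not a technicality one can wave away with ``generic weights''. If you set $A=\sum_i\alpha_i F(\Delta_i)$, the Gelfand transform is $\hat{A}=\sum_i\alpha_i\nu_{\Delta_i}$ where the $\nu_{\Delta_i}$ are continuous functions with values filling $[0,1]$; rational independence of the $\alpha_i$ only prevents coincidences when the $\nu_{\Delta_i}(\lambda)$ range over a fixed countable set (e.g.\ $\{0,1\}$), so two points of $\Lambda_2$ separated by the family $\{\nu_{\Delta_i}\}$ can still be identified by every such weighted sum, and Theorem \ref{separate} gives no handle on this. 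The paper's proof resolves exactly this obstruction by changing the generating set: each $F(\Delta_i)$ is approximated in norm, via its own spectral resolution, by finite linear combinations of projections, and one passes to the countable set $D$ of these projections. Then $\mathcal{A}^C(O_2)\subset\mathcal{A}^C(D)$ while $[\mathcal{A}^C(D)]''=\mathcal{A}^W(F)$ (double commutant), and the Gelfand spectrum $\Lambda$ of $\mathcal{A}^C(D)$ is homeomorphic to a \emph{closed subset of} $\prod_{i=1}^\infty\{0,1\}$ because the generators are projections. On such a space the function $f(\lambda)=\sum_i x_i/3^i$ is continuous and injective on \emph{all} of $\Lambda$ (ternary coefficients with digits $0,1$), hence a homeomorphism onto $\sigma(A)=f(\Lambda)\subset[0,1]$; setting $A=\int f\,d\widetilde{E}$ one gets items 1)--3) with $\nu_\Delta=\gamma_\Delta\circ f^{-1}$ continuous on the whole spectrum, with no genericity, no restriction to a full-measure set, and no Tietze extension. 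Note also that your fallback ``injective on a full-$\widetilde{E}$-measure $M$'' would not suffice as stated: $M$ is only a Borel set, so $\hat A|_M$ being a continuous bijection does not make it a homeomorphism ``by compactness'', and $\Gamma=\hat A(M)$ need not be closed in $\sigma(A)$, so Tietze does not apply.

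A secondary gap is the upgrade from a weak to a strong Markov kernel. Discarding ``one null set per $\Delta_i\in\mathcal{R(S)}$'' only repairs countable additivity along countably many prescribed decompositions; it does not produce, for a.e.\ $\lambda$, a genuine probability measure on all of $\mathcal{B}(X)$. The paper instead invokes the fact that $X$ is Polish, so every weak Markov kernel admits a Markov-kernel version $\phi$ (Refs.\ \cite{P,P0,B1}), and then uses the countable ring only to find a single null set $N$ off which $\phi$ agrees with the continuous functions $\nu_{\Delta_i}$, giving $\Gamma=\sigma(A)-N$. Your closing portmanteau argument for the Feller property is essentially the paper's (via \cite{Billingsley}), and your $(\Leftarrow)$ direction is fine, but as it stands the forward direction is incomplete precisely at the construction of a continuous, injective ``sharp coordinate'' and at the existence of the Markov-kernel version.
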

\begin{proof}
$\mathcal{A}^W(F)$ coincides with the von Neumann algebra generated by the set $O_2:=\{F(\Delta)\}_{\Delta\in\mathcal{R(S)}}$ where, $\mathcal{R(S)}\subset\mathcal{B}(X)$ is the ring generated by $\mathcal{S}$, the countable basis for the topology of $X$ (see appendix A for the proof). We recall that both $\mathcal{S}$ and $\mathcal{R(S)}$  are countable. 

Now, we proceed to the proof of the existence of $A$. Let $\{\Delta_i\}_{i\in\mathbb{N}}$ be an enumeration of the set $\mathcal{R(S)}$. Let $E^{(i)}$ denote the spectral measure corresponding to $F(\Delta_i)\in O_2$. We have $F(\Delta_i)=\int x\,d E^{(i)}_x$. Therefore, for each $i,k\in\mathbb{N}$ there exists a division $\{\Delta_j^{(i,k)}\}_{j=1,\dots,m_{i,k}}$ of $[0,1]$ such that 
\begin{equation}\label{D}
\big\|\sum_{j=1}^{m_{i,k}}x^{(i,k)}_j\,E^{(i)}(\Delta_j^{(i,k)})-F(\Delta_i)\big\|\leq \frac{1}{k}.
\end{equation} 
\noindent
where, $x^{(i,k)}_j\in\Delta_j^{(i,k)}$ for any $i,k\in\mathbb{N}$ and $j=1,\dots,m_{i,k}$.
\noindent
By the spectral theorem, $\{E^{(i)}(\Delta_j^{i,k})\}_{j\leq m_{i,k}}\subset\mathcal{A}^W(F)$ for any $i,k\in\mathbb{N}$. Therefore, the von Neumann algebra $\mathcal{A}^W(D)$ generated by the set $D:=\{E^{(i)}(\Delta_j^{i,k}),\,j\leq m_{i,k},\,i,k\in\mathbb{N}\}$ is contained in $\mathcal{A}^W(F)$  
\begin{equation}\label{D1}
\mathcal{A}^W(D)\subset\mathcal{A}^W(F)=\mathcal{A}^W(O_2).
\end{equation}
\noindent
 Moreover, by (\ref{D}) 
\begin{equation*}
\mathcal{A}^C(O_2)\subset\mathcal{A}^C(D)\subset\mathcal{A}^W(F).
\end{equation*}
\noindent
where $\mathcal{A}^C(O_2)$ and $\mathcal{A}^C(D)$ are the $C^*$-algebras generated by $O_2$ and $D$ respectively.
\noindent
By the double commutant theorem, 
\begin{align*}
\mathcal{A}^W(F)&=[\mathcal{A}^C(O_2)]''\subset[\mathcal{A}^C(D)]''=\mathcal{A}^W(D)
\end{align*} 
\noindent
so that (see equation \ref{D1}),
\begin{equation}\label{double}
\mathcal{A}^W(D)=\mathcal{A}^W(F).
\end{equation} 

\noindent
By theorem 11, page 871 in Ref. \cite{Dunford}, there is a homeomorphism $\pi:\Lambda\to\pi(\Lambda)\subset\prod_{i=1}^{\infty}\{0,1\}$ which identifies the spectrum $\Lambda$ of $\mathcal{A}^C(D)$ with a closed subset of $\prod_{i=1}^{\infty}\{0,1\}$. Moreover, the function $f:\Lambda\to[0,1]$,

\begin{equation*}
f(\lambda):=\sum_{i=1}^{\infty}\frac{x_i}{3^i};\quad (x_1,\dots,x_n,\dots)=\pi(\lambda)
\end{equation*}
\noindent
is continuous and injective and then it distinguishes the points of $\Lambda$. Since $\Lambda$ and $[0,1]$ are Hausdorff, $f:\Lambda\to f(\Lambda)$ is a homeomorphism.


\noindent

\noindent
By the Gelfand-Naimark theorem and the spectral theorem for representations of commutative $C^*$-algebras, 
there is  an isometric $^*$-isomorphism between $\mathcal{A}^C(D)$ and $\mathcal{C}(\Lambda)$

\begin{align}\label{E}
T:\mathcal{C}(\Lambda)&\to \mathcal{A}^C(D)\subset B(\mathcal{H})\\
g&\mapsto T(g)=\int_{\Lambda}g(\lambda) d\widetilde{E}_{\lambda}.\notag
\end{align}
\noindent
where  $\widetilde{E}$ is the spectral measure from $\mathcal{B}(\Lambda)$ to $\mathcal{E(H)}$ corresponding to $T$.


\noindent
Since $f$ distinguishes the points of $\Lambda$, it generates $\mathcal{C}(\Lambda)$ and then 
\begin{equation*}
A=\int_{\Lambda} f(\lambda)\,d\widetilde{E}_{\lambda}
\end{equation*}
generates both $\mathcal{A}^C(D)$ and $\mathcal{A}^W(F)$.


\vskip.2cm
Now, we proceed to the proof of the existence of the weak Markov kernel $\widetilde{\nu}$ such that $(F,A,\widetilde{\nu})$ is a von Neumann triplet.

By (\ref{E}), for each $\Delta\in\mathcal{R(S)}$, there exists a continuous function $\gamma_{\Delta}\in\mathcal{C}(\Lambda)$ such that 
$$F(\Delta)=\int_{\Lambda}\gamma_{\Delta}(\lambda)\, d\widetilde{E}_{\lambda}.$$


\noindent
Let us consider the continuous function
\begin{equation*}
\nu_{\Delta}(t):=(\gamma_{\Delta}\circ f^{-1})(t),\quad\Delta\in\mathcal{R(S)}.
\end{equation*}
\noindent

\noindent

By the change of measure principle, we have,
\begin{align*}
F(\Delta)&=\int_{\Lambda}\gamma_{\Delta}(\lambda)\,d\widetilde{E}_{\lambda}=\int_{\Lambda}\gamma_{\Delta}(f^{-1}(f(\lambda)))\,d\widetilde{E}_{\lambda}\\
&=\int_{\sigma(A)}\gamma_{\Delta}(f^{-1}(t))\,d E_{t}=\int_{\sigma(A)}\nu_{\Delta}(t)\,d E_{t}=\nu_{\Delta}(A)
\end{align*} 
where $\sigma(A)=f(\Lambda)$ 
 and $E$ is the spectral measure corresponding to $A$ and defined by $E(\Delta)=\widetilde{E}(f^{-1}(\Delta))$, $\Delta\in\mathcal{B}(\sigma(A))$. 
\noindent
Therefore, for each $\Delta\in\mathcal{R(S)}$, $\nu_{\Delta}(f(\lambda))=\gamma_{\Delta}(\lambda)$, $\lambda\in\Lambda$, and $F(\Delta)=\nu_{\Delta}(A)$.  

\noindent
Moreover, for each $\lambda\in\sigma(A)$, the map $\nu_{(\cdot)}(\lambda):\mathcal{R(S)}\to[0,1]$ defines an additive set function. Indeed, let $\Delta\in\mathcal{R(S)}$ be the disjoint union of the sets $\Delta_1,\Delta_2\in\mathcal{R(S)}$. Then,
\begin{align*}
\int\nu_{(\Delta_1\cup\Delta_2)}(\lambda)\,dE_{\lambda}&=F(\Delta_1\cup\Delta_2)=F(\Delta_1)+F(\Delta_1)\\
&=\int\big[\nu_{\Delta_1}(\lambda)+\nu_{\Delta_2}(\lambda)\big]\,dE_{\lambda}
\end{align*}
so that, by the continuity of $\nu_{\Delta_1}(\lambda)$ and $\nu_{\Delta_2}(\lambda)$, we get 
\begin{equation*}
\nu_{\Delta_1}(\lambda)+\nu_{\Delta_2}(\lambda)=\nu_{(\Delta_1\cup\Delta_2)}(\lambda),\quad\forall\lambda\in\sigma(A).
\end{equation*}

Now, we extend $\nu$ to all $\mathcal{B}(X)$.

\noindent
Since  $A$ is the generator of $\mathcal{A}^W(F)$, for each $\Delta\in\mathcal{B}(X)$, there exists a Borel function $\omega_{\Delta}$ such that.
\begin{equation*}
F(\Delta)=\int_{\sigma(A)}\omega_{\Delta}(t)\,d E_t
\end{equation*}
\noindent
Then, we can consider the map $\widetilde{\nu}:\sigma(A)\times\mathcal{B}(X)\to[0,1]$ defined as follows
\begin{equation}\label{weakMK}
\widetilde{\nu}_{\Delta}(\lambda)=
\begin{cases}
\nu_{\Delta}(\lambda) & if \quad \Delta\in\mathcal{R(S)}\\
\omega_{\Delta}(\lambda) & if \quad \Delta\notin\mathcal{R(S)}.
\end{cases}
\end{equation}
\noindent
\noindent
Since $\widetilde{\nu}$ coincides with $\nu$ on $\mathcal{R(S)}$ it is additive on $\mathcal{R(S)}$. 

\noindent

\noindent
In order to prove that $\widetilde{\nu}$ is a weak Markov kernel, let us consider a set $\Delta\in\mathcal{B}(X)$ which is the disjoint union of the sets $\{\Delta_i\}_{i\in\mathbb{N}}$, $\Delta_i\in\mathcal{B}(X)$. Then,
\begin{align*}
\int\widetilde{\nu}_{(\cup_{i=1}^{\infty}\Delta_i)}(\lambda)\,dE_{\lambda}=\int\widetilde{\nu}_{\Delta}(\lambda)d E_\lambda=F(\Delta)=\sum_{i=1}^{\infty}F(\Delta_i)=\int\sum_{i=1}^{\infty}\widetilde{\nu}_{\Delta_i}(\lambda)\,dE_\lambda
\end{align*}
so that, 
\begin{equation*}
\sum_{i=1}^{\infty}\widetilde{\nu}_{\Delta_i}(\lambda)=\widetilde{\nu}_{\Delta}(\lambda), \quad E-a.e,
\end{equation*}
which implies that $\widetilde{\nu}:\sigma(A)\times\mathcal{B}(X)\to[0,1]$ is a weak Markov kernel. In particular $(F,A,\widetilde{\nu})$ is a von Neumann triplet.

Now, we proceed to prove the existence of the Markov kernel $\mu:\Gamma\times\mathcal{B}(X)\to[0,1]$ such that items 1, 2, and 3 of the theorem are satisfied. 


Since $X$ is Hausdorff locally compact second countable, it is a Polish space 
and, to each weak Markov kernel $\widetilde{\nu}:\sigma(A)\times\mathcal{B}(X)\to[0,1]$ such that $(F,A,\widetilde{\nu})$ is a von Neumann triplet, there corresponds a Markov kernel $\phi:\sigma(A)\times\mathcal{B}(X)\to[0,1]$ such that $(F,A,\phi)$ is a von Neumann triplet \cite{P,P0,B1}.
 Then, for each $\Delta\in\mathcal{B}(X)$,
\begin{equation*}
\int \widetilde{\nu}_{\Delta}(\lambda)\,dE_\lambda=F(\Delta)=\int \phi_{\Delta}(\lambda)\,dE_\lambda
\end{equation*}
\noindent
and,
\begin{equation}\label{ae}
\phi_{\Delta}(\lambda)=\widetilde{\nu}_{\Delta}(\lambda),\quad E-a.e.
\end{equation}
\noindent
By equation (\ref{ae}), for each $i\in\mathbb{N}$, there is a set $N_i\subset\sigma(A)$, $E(N_i)=\mathbf{0}$, such that 
\begin{equation}
\phi_{\Delta_i}(\lambda)=\widetilde{\nu}_{\Delta_i}(\lambda),\quad \lambda\in \sigma(A)-N_i.
\end{equation}
\noindent
Then, for each $i\in\mathbb{N}$,
\begin{equation}\label{17}
\phi_{\Delta_i}(\lambda)=\widetilde{\nu}_{\Delta_i}(\lambda),\quad\lambda\in\Gamma:=\sigma(A)-N
\end{equation}
\noindent
where, 
\begin{equation*}
N:=\cup_{i=1}^{\infty}N_i,\quad E(N)=\mathbf{0}.
\end{equation*}
\noindent

\noindent
Now, the map $\mu_{(\cdot)}(\cdot):\sigma(A)\times\mathcal{B}(X)\to[0,1]$,
\begin{equation*}
\mu_{(\cdot)}(\lambda)=
\begin{cases}
\widetilde{\nu}_{(\cdot)}(\lambda) & \lambda\in N\\
\phi_{(\cdot)}(\lambda) & \lambda\in\Gamma
\end{cases}
\end{equation*}
\noindent
is a strong Markov kernel since its restriction to $\Gamma$, $\phi_{(\cdot)}(\cdot):\Gamma\times\mathcal{B}(X)\to[0,1]$, is a Markov kernel.

\noindent
By (\ref{17}), 
$$\mu_{\Delta_i}(\lambda)=\widetilde{\nu}_{\Delta_i}(\lambda)$$
\noindent
so that, $\mu_{\Delta}$ is continuous for each $\Delta\in\mathcal{R(S)}$. We also have,
$$\mu_{\Delta}(A)=\phi_{\Delta}(A)=F(\Delta), \quad\Delta\in\mathcal{B}(X).$$
We have proved items 1, 2, and 3. Item 4 comes from theorem \ref{separate}. 

It remains to prove that $\mu$ is a Feller Markov kernel. By item 1, $\mu_{\Delta}$ is continuous for each $\Delta\in \mathcal{R(S)}$. Notice that for each open set $O\in\mathcal{B}(X)$, there is a countable family of sets $\Delta_i\in\mathcal{R(S)}$ such that $O=\cup_{i=1}^{\infty}\Delta_i$. Therefore, by theorem 2.2 in Ref. \cite{Billingsley}, and the continuity of $\mu_{\Delta}$ for each $\Delta\in\mathcal{R(S)}$, $\lim_{n\to\infty}\lambda_n=\lambda$ implies,
$$\lim_{n\to\infty}\int_X f(x)\,\mu_{dx}(\lambda_n)=\int_X f(x)\,\mu_{dx}(\lambda),\quad f\in\mathcal{C}_b(X)$$
\noindent
where, $\mathcal{C}_b(X)$ is the space of bounded, continuous real functions. Then, $G(\lambda):=\int f(x)\,\mu_{dx}(\lambda)$ is continuous whenever $f$ is continuous and $\mu$ is a Feller Markov kernel.

Finally, we note that $F(\Delta)=\mu_{\Delta}(A)$ implies the commutativity of $F$ and that ends the proof.

\end{proof}

In the proof of theorem \ref{weak} we have shown the existence of a Markov kernel $\phi$ such that $(F,A,\phi)$ is a von Neumann triplet. Then, we can state the following theorem.
\begin{theorem}\label{Markov}
A POVM $F:\mathcal{B}(X)\to\mathcal{F(H)}$ is commutative if and only if, there exists a bounded self-adjoint operator $A=\int \lambda\,dE_\lambda$ with spectrum $\sigma(A)\subset[0,1]$ and a Markov Kernel  $\phi:\sigma(A)\times\mathcal{B}(X)\to[0,1]$ such that 
$$F(\Delta)=\int_{\sigma(A)}\,\phi_{\Delta}(\lambda)\,dE_{\lambda},\quad\Delta\in\mathcal{B}(X).$$
\end{theorem}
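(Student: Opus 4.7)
The two directions require separate handling, and the heart of the argument is already contained in the proof of Theorem \ref{weak}; only a small extra step is needed to upgrade the strong Markov kernel produced there to an honest Markov kernel defined everywhere on $\sigma(A)$.

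For the sufficiency direction, suppose $A=\int\lambda\,dE_\lambda$ and $\mu:\sigma(A)\times\mathcal{B}(X)\to[0,1]$ are given as in the statement. For each fixed $\Delta\in\mathcal{B}(X)$ the map $\lambda\mapsto\mu_\Delta(\lambda)$ is a bounded Borel function on $\sigma(A)$, so by the functional calculus for self-adjoint operators (equation \eqref{functionalcalculus}) we obtain $F(\Delta)=\int_{\sigma(A)}\mu_\Delta(\lambda)\,dE_\lambda=\mu_\Delta(A)\in\mathcal{A}^W(A)$. Since $\mathcal{A}^W(A)$ is commutative, the whole range $\{F(\Delta)\}_{\Delta\in\mathcal{B}(X)}$ consists of mutually commuting operators, so $F$ is commutative.

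For the necessity direction, I would apply Theorem \ref{weak} directly. That theorem produces a bounded self-adjoint operator $A$ with $\sigma(A)\subset[0,1]$, its spectral measure $E$, a set $\Gamma\subset\sigma(A)$ with $E(\Gamma)=\mathbf{1}$, and a strong Markov kernel $(\mu,E,\Gamma\subset\sigma(A))$ satisfying $F(\Delta)=\int_\Gamma\mu_\Delta(\lambda)\,dE_\lambda$. The only remaining task is to turn $\mu$, which is a genuine Markov kernel only on $\Gamma$, into a Markov kernel defined on all of $\sigma(A)$. To do this, fix any point $x_0\in X$ and redefine
\begin{equation*}
\widehat{\mu}_\Delta(\lambda):=
\begin{cases}
\mu_\Delta(\lambda)&\lambda\in\Gamma,\\
\delta_{x_0}(\Delta)&\lambda\in\sigma(A)\setminus\Gamma.
\end{cases}
\end{equation*}
Since $\Gamma$ is Borel (it arises as the complement of a countable union of null sets in the proof of Theorem \ref{weak}), the function $\widehat{\mu}_\Delta$ is Borel measurable for every $\Delta\in\mathcal{B}(X)$; and for every $\lambda\in\sigma(A)$ the set function $\widehat{\mu}_{(\cdot)}(\lambda)$ is a probability measure on $\mathcal{B}(X)$. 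Hence $\widehat{\mu}$ is a Markov kernel on $\sigma(A)\times\mathcal{B}(X)$. Because $\sigma(A)\setminus\Gamma$ is $E$-null, the two integrals agree:
\begin{equation*}
\int_{\sigma(A)}\widehat{\mu}_\Delta(\lambda)\,dE_\lambda=\int_\Gamma\mu_\Delta(\lambda)\,dE_\lambda=F(\Delta),\quad\Delta\in\mathcal{B}(X),
\end{equation*}
which is the required representation.

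The only mildly delicate step is verifying that $\Gamma$ is Borel and therefore that $\widehat{\mu}$ is jointly reasonable — but this is immediate from the construction in Theorem \ref{weak}, where $\sigma(A)\setminus\Gamma$ is obtained as a countable union of $E$-null Borel sets coming from the countable family $\mathcal{R(S)}$. No new machinery beyond what has already been deployed is required; the statement is essentially the ``strong-kernel'' output of Theorem \ref{weak} with the null set absorbed into a cosmetic redefinition.
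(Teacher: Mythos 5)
Your proposal is correct and follows essentially the same route as the paper: both directions reduce to the construction already carried out for Theorem \ref{weak}, with commutativity in the easy direction coming from $F(\Delta)=\mu_{\Delta}(A)\in\mathcal{A}^W(A)$. The only difference is cosmetic: the paper re-invokes the weak-Markov-kernel-to-Markov-kernel correspondence for Polish $X$ (the references \cite{P,P0,B1} already used inside the proof of Theorem \ref{weak}) to get a kernel on all of $\sigma(A)$, whereas you take the strong kernel $(\mu,E,\Gamma)$ from the statement of Theorem \ref{weak} and patch the $E$-null set $\sigma(A)\setminus\Gamma$ with a point mass $\delta_{x_0}$, which is a legitimate and equally valid way to absorb the null set.
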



\vskip1cm

\section{Characterization of POVMs which admit strong Feller Markov Kernels}
\noindent
\noindent
In the last section we proved that each commutative POVM admits a strong Markov kernel $\mu$ such that $\mu_{\Delta}$ is a continuous function for each $\Delta\in\mathcal{R(S)}$ where, $\mathcal{R(S)}$ is a ring which generates the Borel $\sigma$-algebra $\mathcal{B}(X)$.

\noindent
In the present section we characterize the commutative POVMs for which the Markov kernel $\mu$, whose existence was proved in theorem \ref{Markov}, is such that $\mu_{\Delta}$ is continuous for each $\Delta\in\mathcal{B}(X)$.
Whenever such a Markov kernel exists, we say that the POVM admits a strong Feller Markov kernel. In particular, we prove that a commutative POVM $F$ admits a strong Feller Markov kernel  if and only if $F$ is uniformly continuous.  

\begin{definition}
Let $F:\mathcal{B}(X)\to\mathcal{L}_s^+(\mathcal{H})$ be a POVM. 
$F$ is said to be uniformly continuous at $\Delta$ if, for any disjoint decomposition $\Delta=\cup_{i=1}^{\infty}\Delta_i$, 
$$\lim_{n\to\infty}\sum_{i=1}^{n}F(\Delta_i)=F(\Delta)$$
\noindent
 in the uniform operator topology. $F$ is said uniformly continuous if it is uniformly continuous at each $\Delta\in\mathcal{B}(X)$.
\end{definition}
\noindent
Notice that the term uniformly continuous derives from the fact that the $\sigma$-additivity of $F$  in the uniform operator topology is equivalent to the continuity in the uniform operator topology. Analogously, the $\sigma$-additivity of $F$ in the weak operator topology is equivalent to the continuity of $F$ in the weak operator topology \cite{Berberian}. 

\begin{proposition}[\cite{B12}]\label{down}
$F$ is uniformly continuous if and only if, 
$$\lim_{i\to\infty}\| F(\Delta_i)\|=0$$
\noindent
whenever $\Delta_i\downarrow\emptyset$.
\end{proposition}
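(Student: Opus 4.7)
The plan is to establish both implications directly from the definition, using the key observation that for a decreasing disjoint "tail'' of a partition, the operator norm of the tail controls the distance between the partial sum and the total value of $F$.

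For the forward direction, I would start with a sequence $\Delta_i \downarrow \emptyset$ and fix $\Delta_1$. Setting $E_i := \Delta_i \setminus \Delta_{i+1}$, the family $\{E_i\}_{i\in\mathbb{N}}$ is a countable disjoint decomposition of $\Delta_1$, and the finite unions satisfy $\bigcup_{i=1}^{n} E_i = \Delta_1 \setminus \Delta_{n+1}$. Uniform continuity of $F$ at $\Delta_1$ then gives
\begin{equation*}
\sum_{i=1}^{n} F(E_i) = F(\Delta_1) - F(\Delta_{n+1}) \xrightarrow[n\to\infty]{u} F(\Delta_1),
\end{equation*}
which forces $\|F(\Delta_{n+1})\| \to 0$.

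For the reverse direction, I would fix an arbitrary $\Delta \in \mathcal{B}(X)$ together with a disjoint decomposition $\Delta = \bigcup_{i=1}^{\infty} \Delta_i$, and define the tail sets $R_n := \bigcup_{i=n+1}^{\infty}\Delta_i = \Delta \setminus \bigcup_{i=1}^{n}\Delta_i$. Because the $\Delta_i$ are pairwise disjoint, no point lies in infinitely many of them, so $R_n \downarrow \emptyset$. Finite additivity (which is immediate from the $\sigma$-additivity of $F$ in the weak operator topology) gives $F(\Delta) = \sum_{i=1}^{n} F(\Delta_i) + F(R_n)$, so
\begin{equation*}
\Bigl\| F(\Delta) - \sum_{i=1}^{n} F(\Delta_i) \Bigr\| = \|F(R_n)\|,
\end{equation*}
and the hypothesis applied to $R_n \downarrow \emptyset$ yields convergence of $\sum_{i=1}^{n} F(\Delta_i)$ to $F(\Delta)$ in the uniform operator topology. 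Since $\Delta$ and the decomposition were arbitrary, $F$ is uniformly continuous.

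The only subtlety worth flagging is the identity $F(\Delta) = \sum_{i=1}^n F(\Delta_i) + F(R_n)$, which is not a uniform-norm convergence claim but a genuine equality of bounded operators arising from finite additivity of $F$ applied to the decomposition $\Delta = \Delta_1 \sqcup \cdots \sqcup \Delta_n \sqcup R_n$; finite additivity follows at once from weak $\sigma$-additivity, so no new hypothesis is used. With that equality in hand, both directions are almost symmetric, and no real obstacle remains beyond being careful that the $R_n$ are Borel (which they are, as countable unions of Borel sets) and that the decreasing sequence $R_n$ actually has empty intersection.
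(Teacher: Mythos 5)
Your argument is correct and complete. Note, however, that the paper does not prove Proposition \ref{down} at all: it is quoted from Ref.\ \cite{B12}, so there is no in-text proof to compare against. Your two-sided argument is the standard one and fills that gap cleanly: the disjointification $E_i=\Delta_i\setminus\Delta_{i+1}$ of $\Delta_1$ (a genuine decomposition precisely because $\bigcap_i\Delta_i=\emptyset$) reduces the forward implication to uniform continuity at $\Delta_1$, while the tail sets $R_n=\Delta\setminus\bigcup_{i=1}^{n}\Delta_i$, which decrease to $\emptyset$ since each point of $\Delta$ lies in only one $\Delta_i$, give the converse; in both directions only finite additivity of $F$ (immediate from weak $\sigma$-additivity) is invoked, exactly as you flag.
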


\begin{definition}
A Markov kernel $\mu_{(\cdot)}(\cdot):\Lambda\times\mathcal{B}(X)\to[0,1]$ is said to be strong Feller if  $\mu_{\Delta}$ is a continuous function for each $\Delta\in\mathcal{B}(X)$.
\end{definition}
\begin{definition}
We say that a commutative POVM admits a strong Feller Markov kernel if there exists a strong Feller Markov kernel $\mu$ such that $F(\Delta)=\int \mu_{\Delta}(\lambda)\,dE_{\lambda}$, where $E$ is the sharp version of $F$.
\end{definition}

\noindent
In order to prove the main theorem of the section we need the following lemma.
\begin{lemma}\label{lem4}
Let $F$ be uniformly continuous. Let $\mu$ be a weak Markov kernel and $(F,A,\mu)$ a von Neumann triplet. Suppose that $\mu_{\Delta}$ is continuous for each $\Delta\in\mathcal{R(S)}$. Then, for each $\lambda\in\sigma(A)$, $\mu_{(\cdot)}(\lambda)$ is $\sigma$-additive on $\mathcal{R(S)}$.
\end{lemma}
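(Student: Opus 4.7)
The plan is to reduce the $\sigma$-additivity of $\mu_{(\cdot)}(\lambda)$ on $\mathcal{R(S)}$ to the norm convergence of the operators $F(R_n)$, where $R_n$ is the ``tail'' of the decomposition, and to convert that operator norm into a supremum over $\sigma(A)$ by exploiting the continuity of $\mu_\Delta$ for $\Delta\in\mathcal{R(S)}$.

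Fix a disjoint family $\{\Delta_i\}_{i\in\mathbb{N}}\subset\mathcal{R(S)}$ whose union $\Delta$ also lies in $\mathcal{R(S)}$. Set $R_n:=\Delta\setminus\bigcup_{i=1}^{n}\Delta_i$, which is again in $\mathcal{R(S)}$ since $\mathcal{R(S)}$ is a ring. By the weak Markov kernel property we have, $E$-a.e. on $\sigma(A)$,
\begin{equation*}
\mu_{\Delta}(\lambda)=\sum_{i=1}^{n}\mu_{\Delta_i}(\lambda)+\mu_{R_n}(\lambda).
\end{equation*}
Because every term is continuous on $\sigma(A)$ (the sets involved all belong to $\mathcal{R(S)}$) and because the support of $E$ is $\sigma(A)$, so that every non-empty relatively open subset of $\sigma(A)$ has strictly positive $E$-measure, the $E$-a.e.\ equality upgrades to an equality holding for \emph{every} $\lambda\in\sigma(A)$.

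Next I would translate the uniform continuity of $F$ into pointwise information on $\mu_{R_n}$. Since $R_n\downarrow\emptyset$, Proposition \ref{down} gives $\|F(R_n)\|\to 0$. On the other hand, $(F,A,\mu)$ being a von Neumann triplet yields $F(R_n)=\mu_{R_n}(A)$, and for a bounded Borel function $g$ on $\sigma(A)$ that is actually continuous one has $\|g(A)\|=\sup_{\lambda\in\sigma(A)}|g(\lambda)|$ (essential supremum with respect to the spectral measure equals the ordinary supremum of a continuous function on the support). Applying this to $g=\mu_{R_n}$ gives
\begin{equation*}
\sup_{\lambda\in\sigma(A)}\mu_{R_n}(\lambda)=\|\mu_{R_n}(A)\|=\|F(R_n)\|\xrightarrow[n\to\infty]{}0,
\end{equation*}
so $\mu_{R_n}\to 0$ uniformly on $\sigma(A)$. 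Combined with the displayed identity, this forces $\sum_{i=1}^{n}\mu_{\Delta_i}(\lambda)\to \mu_{\Delta}(\lambda)$ for every $\lambda\in\sigma(A)$, i.e.\ the claimed $\sigma$-additivity of $\mu_{(\cdot)}(\lambda)$ on $\mathcal{R(S)}$.

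The step I expect to be the main obstacle is the passage from the $E$-a.e.\ additivity of $\mu$ to a \emph{pointwise} identity on all of $\sigma(A)$: it is here that the hypothesis of continuity of $\mu_\Delta$ on $\mathcal{R(S)}$ becomes indispensable, and one must justify carefully that the complement of an $E$-null set is dense in $\sigma(A)$ (equivalently, that $\sigma(A)$ is the topological support of $E$, which is immediate from the definition of the spectrum of a PVM). Once that is in place, the remaining ingredient (the equality $\|g(A)\|=\sup_{\sigma(A)}|g|$ for continuous $g$) is a standard consequence of the functional calculus, and the argument closes cleanly.
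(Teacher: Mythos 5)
Your proof is correct and follows essentially the same route as the paper: both arguments convert the operator-norm convergence supplied by uniform continuity of $F$ (you via Proposition \ref{down} applied to the tails $R_n$, the paper via norm $\sigma$-additivity applied to the partial sums) into uniform convergence on $\sigma(A)$ through the isometry $\|g(A)\|=\sup_{\sigma(A)}|g|$ for continuous $g$, using the continuity of $\mu_\Delta$ on $\mathcal{R(S)}$. Your extra step upgrading the $E$-a.e.\ finite additivity to a pointwise identity via the support of $E$ is a harmless variant of what the paper absorbs directly into that isometry.
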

\begin{proof}
Let $\Delta,\Delta_i\in\mathcal{R(S)}$, $\Delta_i\cap\Delta_j=\emptyset$, $\cup_{i=1}^{\infty}\Delta_i=\Delta$. Then, 
$\forall\epsilon>0$, there exists a number $\bar{n}\in\mathbb{N}$, such that for any $n>\bar{n}$,
\begin{align}\label{conti}
\|\mu_{\Delta}-\sum_{i=1}^n\mu_{\Delta_i}\|_{\infty}=\|\int\big(\mu_{\Delta}(\lambda)-\sum_{i=1}^n\mu_{\Delta_i}(\lambda)\big)\,dE_{\lambda}&\|\\
=\|F(\Delta)-F(\cup_{i=1}^n\Delta_i)&\|\leq\epsilon.\notag
\end{align}
Therefore, 
\begin{equation*}
|\mu_{\Delta}(\lambda)-\sum_{i=1}^n\mu_{\Delta_i}(\lambda)|\leq\epsilon,\,\,\,\,\forall\lambda\in\sigma(A).
\end{equation*}
\end{proof}
\vskip.3cm
\noindent

\noindent

\begin{theorem}\label{uni}
A commutative POVM $F:\mathcal{B}(X)\to\mathcal{L}_s^+(\mathcal{H})$ admits a strong Feller Markov kernel if and only if it is uniformly continuous.
\end{theorem}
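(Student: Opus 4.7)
The proof naturally splits along the biconditional, and both directions hinge on the same fact: for continuous functions on the compact set $\sigma(A)\subset[0,1]$, the operator norm under the functional calculus of $A$ coincides with the supremum norm on $\sigma(A)$.

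For the implication ``strong Feller $\Rightarrow$ uniformly continuous'', suppose $\mu$ is a strong Feller Markov kernel with $F(\Delta)=\int\mu_\Delta(\lambda)\,dE_\lambda$. By Proposition \ref{down} it suffices to show $\|F(\Delta_n)\|\to 0$ whenever $\Delta_n\downarrow\emptyset$. Since $\mu_{(\cdot)}(\lambda)$ is a probability measure, $\mu_{\Delta_n}(\lambda)\downarrow 0$ pointwise; the functions $\mu_{\Delta_n}$ are continuous on the compact space $\sigma(A)$, so Dini's theorem promotes this pointwise decrease to uniform convergence. Hence $\|F(\Delta_n)\|=\|\mu_{\Delta_n}(A)\|=\|\mu_{\Delta_n}\|_\infty\to 0$, which gives uniform continuity of $F$.

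For the converse the plan is to show that every $F(\Delta)$ already lies in the $C^\ast$-algebra generated by $A$, i.e.\ $F(\Delta)=g_\Delta(A)$ for some $g_\Delta\in\mathcal{C}(\sigma(A))$, and then to read off the kernel $\mu_\Delta:=g_\Delta$. I would introduce
$$\mathcal{D}:=\{\Delta\in\mathcal{B}(X):F(\Delta)=g_\Delta(A)\text{ for some }g_\Delta\in\mathcal{C}(\sigma(A))\}.$$
Theorem \ref{weak} furnishes $\mathcal{R(S)}\subset\mathcal{D}$ (taking $g_\Delta=\mu_\Delta$); closure of $\mathcal{D}$ under complements is immediate from $g_{X\setminus\Delta}=1-g_\Delta$; and the decisive step is closure under countable disjoint unions. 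For $\Delta=\bigsqcup_i\Delta_i$ with $\Delta_i\in\mathcal{D}$, uniform continuity of $F$ gives $\|F(\Delta)-\sum_{i=1}^n g_{\Delta_i}(A)\|\to 0$; the isometric nature of the continuous functional calculus $\mathcal{C}(\sigma(A))\to C^\ast(A)$ then upgrades this to a sup-norm Cauchy condition on the partial sums $\sum_{i=1}^n g_{\Delta_i}$, whose uniform limit $g_\Delta$ is continuous and satisfies $g_\Delta(A)=F(\Delta)$. Since $\mathcal{R(S)}$ is a $\pi$-system (rings are closed under finite intersections) and $\mathcal{D}$ is a Dynkin system containing it, the $\pi$--$\lambda$ theorem forces $\mathcal{D}=\mathcal{B}(X)$.

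Finally, setting $\mu_\Delta(\lambda):=g_\Delta(\lambda)$ produces a family of continuous functions on $\sigma(A)$, and the injectivity of the continuous functional calculus forces the pointwise identities $g_X=1$, $g_{\Delta_1\sqcup\Delta_2}=g_{\Delta_1}+g_{\Delta_2}$, and $g_{\bigsqcup_i\Delta_i}=\sum_i g_{\Delta_i}$, while $F(\Delta)\geq\mathbf{0}$ together with $g_\Delta\in\mathcal{C}(\sigma(A))$ forces $g_\Delta\geq 0$ on $\sigma(A)$. Hence $\mu_{(\cdot)}(\lambda)$ is a genuine probability measure on $\mathcal{B}(X)$ for every $\lambda\in\sigma(A)$, and $\mu$ is the sought strong Feller Markov kernel. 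I expect the main obstacle to be precisely the norm-to-sup transfer in the disjoint-union step: one must invoke the isometric continuous functional calculus (equivalently, that the spectral measure $E$ has full support on $\sigma(A)$) to upgrade operator-norm convergence in $C^\ast(A)$ to uniform convergence of the continuous symbols, thereby keeping the limit inside $\mathcal{C}(\sigma(A))$ rather than only inside the Borel functions of $A$.
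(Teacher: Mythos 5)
Your proposal is correct. The direction ``strong Feller $\Rightarrow$ uniformly continuous'' is essentially the paper's own argument: both reduce it, via Dini's theorem (Theorem \ref{lem7}) and the identity $\|g(A)\|=\|g\|_{\infty}$ for continuous $g$ on $\sigma(A)$, to promoting the pointwise decay of $\mu_{\Delta}-\sum_{i\leq n}\mu_{\Delta_i}$ to uniform decay. The converse is where you genuinely diverge. The paper takes the kernel of Theorem \ref{weak}, uses Lemma \ref{lem4} to get $\sigma$-additivity of $\mu_{(\cdot)}(\lambda)$ on $\mathcal{R(S)}$ for every $\lambda$, extends each $\mu_{(\cdot)}(\lambda)$ to a probability measure by Carath\'eodory, and then proves continuity of $\widetilde{\mu}_{\Delta}$ and $\widetilde{\mu}_{\Delta}(A)=F(\Delta)$ for all Borel $\Delta$ by climbing the Borel hierarchy (open sets, $G_{\delta}$, $G_{\delta\sigma},\dots$) with transfinite induction up to $\omega_1$. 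You instead run a $\pi$--$\lambda$ argument on the class $\mathcal{D}$ of Borel sets whose $F$-images lie in the continuous functional calculus of $A$: Theorem \ref{weak} gives $\mathcal{R(S)}\subset\mathcal{D}$, normalization $F(X)=\mathbf{1}$ gives closure under complements, and uniform continuity of $F$ combined with the isometry of the continuous functional calculus gives closure under countable disjoint unions (the tail estimate $\|F(\cup_{i>n}\Delta_i)\|\to 0$ makes the partial sums of symbols sup-norm Cauchy, so the limit stays in $\mathcal{C}(\sigma(A))$); injectivity of the calculus then yields positivity, normalization and pointwise $\sigma$-additivity of $\lambda\mapsto g_{(\cdot)}(\lambda)$, so no Carath\'eodory extension is needed. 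The two proofs share the same analytic core --- operator-norm convergence of $F$ transferred to sup-norm convergence of continuous symbols, which is exactly the fact the paper invokes from Theorem 1, page 895, of Ref. \cite{Dunford}, and which you correctly flag as requiring that $E$ have full support on $\sigma(A)$ --- but your Dynkin-class formulation replaces both the Carath\'eodory step and the transfinite induction by a single closure argument, which is shorter and cleaner; the paper's construction has the mild advantage of exhibiting the strong Feller kernel explicitly as the measure-theoretic extension of the particular kernel already constructed on $\mathcal{R(S)}$ in Theorem \ref{weak}, with respect to the sharp version $A$, a property your kernel also enjoys since the $A$ of Theorem \ref{weak} satisfies $\mathcal{A}^W(F)=\mathcal{A}^W(A)$.
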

\begin{proof}
\noindent
Suppose $F$ is uniformly continuous. By theorem \ref{weak}, there is a weak Markov kernel $\mu:\sigma(A)\times\mathcal{B}(X)\to[0,1]$ such that $\mu_{\Delta}(\cdot)$ is continuous for every $\Delta\in\mathcal{R(S)}$ and a self-adjoint operator $A$ such that $(F,A,\mu)$ is a von Neumann triplet.  By lemma \ref{lem4}, $\mu$ is $\sigma$-additive on $\mathcal{R(S)}$. By Charateodory's extension theorem the map $\mu:\sigma(A)\times\mathcal{R(S)}\to[0,1]$ can be extended  to a map $\widetilde{\mu}:\sigma(A)\times\mathcal{B}(X)\to[0,1]$ whose restriction to $\mathcal{R(S)}$ coincides with $\mu$ and such that $\widetilde{\mu}_{(\cdot)}(\lambda)$ is a probability measure for each $\lambda\in\sigma(A)$. Now we prove that $\widetilde{\mu}$ is a Markov kernel such that $F(\Delta)=\widetilde{\mu}_{\Delta}(A)$ and that $\widetilde{\mu}_{\Delta}$ is continuous for each $\Delta\in\mathcal{B}(X)$. We proceed by steps.

\textbf{1)} \textbf{Open sets}. Each open set $G$  is the union of a countable family of sets in $\mathcal{S}$, i.e., $G=\cup_{i=1}^{\infty}\Delta_i$, $\Delta_i\in \mathcal{S}$. Let us define the set $G_n:=\cup_{i=1}^{n}\Delta_i$. Therefore, $G_n\uparrow G$. Moreover, $\mu_{G_n}$ is continuous for each $n\in\mathbb{N}$, and
\begin{equation*}
u-\lim_{n\to\infty}F(G_n)=F(G).
\end{equation*}
Then, 
\begin{equation*}
F(G)=u-\lim_{i\to\infty}F(G_i)=u-\lim_{i\to\infty}\int\widetilde{\mu}_{G_i}(\lambda)\,dE_{\lambda}.
\end{equation*}
It follows that, $\forall\epsilon>0$, there exists a number $\bar{n}\in\mathbb{N}$, such that $n,m>\bar{n}$ implies,
\begin{align}\label{conti1}
\|\widetilde{\mu}_{G_n}-\widetilde{\mu}_{G_m}\|_{\infty}=\|\int[\widetilde{\mu}_{G_n}(\lambda)-\widetilde{\mu}_{G_m}(\lambda)]\,dE_{\lambda}&\|\\
=\|F(G_n)-F(G_m)&\|\leq\epsilon.\notag
\end{align}
Therefore,
\begin{equation}\label{Cauchy0}
|\widetilde{\mu}_{G_n}(\lambda)-\widetilde{\mu}_{G_m}(\lambda)|\leq\epsilon,\,\,\,\,\forall\lambda\in\sigma(A).
\end{equation}
Since $\widetilde{\mu}_{(\cdot)}(\lambda)$ is a probability measure, 
$$\lim_{i\to\infty}\widetilde{\mu}_{G_i}(\lambda)=\widetilde{\mu}_{G}(\lambda),\quad\forall\lambda\in\sigma(A).$$
\noindent
By equation (\ref{Cauchy0}), the convergence is uniform and this proves the continuity of $\widetilde{\mu}_{G}$. Moreover,
\begin{equation*}
F(G)=\lim_{i\to\infty}F(G_i)=\lim_{i\to\infty}\int\widetilde{\mu}_{G_i}(\lambda)\,dE_{\lambda}=\int\widetilde{\mu}_{G}(\lambda)\,dE_{\lambda}=\widetilde{\mu}_G(A).
\end{equation*}

\textbf{2)} \textbf{$\mathbf{G_{\delta}}$ sets}.  Let $G$ be a $G_{\delta}$-set. Then, there is 
a family of open sets $\{G_i\}_{i\in\mathbb{N}}$, $G\subset G_i$, such that $\cap_{i=1}^{\infty}G_i=G$. Then, 
by proceeding similarly to the step 1, one can prove the continuity of $\widetilde{\mu}_{G}$ and the equality $F(G)=\widetilde{\mu}_{G}(A)$.

\textbf{3)} \textbf{Borel sets}.  We use transfinite induction. 
Let $G_0$ be the family of open subsets of $X$, $\omega_1$ the first uncountable ordinal and $G_{\alpha}$, $\alpha<\omega_1$ the Borel hierarchy \cite{Kuratowski}. In particular, $G_1=G_{\delta},\, G_2=G_{\delta\sigma},\, G_3=G_{\delta\sigma\delta},\dots$ and  $G_{\alpha}=(\cup_{\beta<\alpha}G_{\beta})_{\sigma}$  for each limit ordinal $\alpha$. By means of the same reasoning that we used in items 1 and 2, one can prove the continuity of $\widetilde{\mu}_{\Delta}$ as well as that $\widetilde{\mu}_{\Delta}(A)=F(\Delta)$ whenever $\Delta$ is of the kind  $G_{\delta,\sigma},G_{\delta\sigma\delta}\dots$. Analogously, if $\widetilde{\mu}_{\Delta}$ is continuous for each $\Delta\in G_\alpha$ then, $\widetilde{\mu}_{\Delta}$ is continuous for each $\Delta$ in $G_{\alpha+1}$ and $\widetilde{\mu}_\Delta(A)=F(\Delta)$. Indeed, each set in $G_{\alpha+1}$ is either the countable union or the countable intersection of sets in $G_{\alpha}$ and the reasoning in items 1 and 2 can be used. If $\alpha$ is a limit ordinal and $\widetilde{\mu}_{\Delta}$ is continuous for each $\Delta\in G_\beta$, $\beta<\alpha$, then, $\widetilde{\mu}_{\Delta}$ is continuous for each $\Delta\in G_{\alpha}=(\cup_{\beta<\alpha}G_{\beta})_{\sigma}$ and $\widetilde{\mu}_\Delta(A)=F(\Delta)$. Indeed, each set in $G_{\alpha}$ is the countable union of sets in $\cup_{\beta<\alpha}G_{\beta}$ and the reasoning used in item 1 can be used. Therefore, by transfinite induction, $\widetilde{\mu}_{\Delta}$ is continuous for each $\Delta\in\cup_{\alpha<\omega_1}G_{\alpha}=\mathcal{B}(X)$ and $\widetilde{\mu}_{\Delta}(A)=F(\Delta)$.

\vskip.3cm
In order to prove the second part of the theorem we show that the existence of a strong Feller Markov kernel implies the uniform continuity of $F$. Suppose that there exists a strong Feller Markov kernel $\mu$ such that $F(\Delta)=\mu_{\Delta}(\lambda)$. Since $\mu$ is a Markov kernel it is $\sigma$-additive. Then,
\begin{equation*}
\lim_{n\to\infty}\big(\mu_{\Delta}(\lambda)-\sum_{i=1}^{n}\mu_{\Delta_i}(\lambda)\big)=0, \quad\lambda\in\sigma(A).
\end{equation*}
\noindent
where, $\Delta,\Delta_i\in\mathcal{B}(X)$, $\cup_{i=1}^{\infty}\Delta_i=\Delta$.

\noindent
By hypothesis,
$$\mu_{\Delta}(\lambda)-\sum_{i=1}^{n}\mu_{\Delta_i}(\lambda)\in\mathcal{C}(\sigma(A)),\quad\forall n\in\mathbb{N}.$$
\noindent
Then, by theorem \ref{lem7} in appendix B,
$$u-\lim_{n\to\infty}\big(\mu_{\Delta}(\lambda)-\sum_{i=1}^{n}\mu_{\Delta_i}(\lambda)\big)=0.$$
\noindent
so that 
\begin{align*}
\lim_{n\to\infty}\| F(\Delta)-F(\cup_{i=1}^n\Delta_i)\|=\lim_{n\to\infty}\|\mu_{\Delta}-\sum_{i=1}^n\mu_{\Delta_i}\|_{\infty}=0.
\end{align*} 
\noindent
\end{proof}

\begin{example}\label{example2}
Let us consider the following unsharp position observable 
\begin{align}\label{PositionExample2}
Q^f(\Delta)&:=\int_{[0,1]}\mu_{\Delta}(\lambda)\,dQ_\lambda,\quad\Delta\in\mathcal{B}(\mathbb{R}),\\
\mu_{\Delta}(\lambda)&:=\int_{\mathbb{R}}\chi_{\Delta}(\lambda-y)\, f(y)\,dy,\quad \lambda\in[0,1]\notag
\end{align}
\noindent
where, $f$ is a bounded, continuous function such that $f(y)=0$, $y\notin [0,1]$ and  

$$\int_{[0,1]} f(y)\,dy=1,$$
\noindent
 and $Q_\lambda$ is the spectral measure corresponding to the position operator
\begin{align*}
Q:L^2([0,1])&\to L^2([0,1])\\
(Q\psi)(x)&:=x\psi(x)
\end{align*}
\noindent
for almost all $x\in [0,1]$. Notice that, for each $\Delta\in\mathcal{B}({\mathbb{R}})$, $\mu_{\Delta}:[0,1]\to[0,1]$ is continuous. Indeed, by the uniform continuity of $f$, for each $\epsilon>0$, there is a $\delta>0$ such that $\vert \lambda-\lambda'\vert\leq\delta$ implies $\vert f(\lambda-y)-f(\lambda'-y)\vert\leq\epsilon$, for each $y$. Therefore,
\begin{align*}
\vert\mu_{\Delta}(\lambda)-\mu_{\Delta}(\lambda')\vert&=\Big\vert\int_{\mathbb{R}}\chi_{\Delta}(\lambda-y)\, f(y)\,dy-\int_{\mathbb{R}}\chi_{\Delta}(\lambda'-y)\, f(y)\,dy\Big\vert\\
&=\Big\vert\int_{\Delta} [f(\lambda-y)-f(\lambda'-y)]\,dy\Big\vert\leq\epsilon\int_{\Delta\cap[-1,1]}\,dy\leq2\epsilon
\end{align*}
\noindent
By theorem \ref{uni} and the continuity of $\mu_{\Delta}$, $\Delta\in\mathcal{B}(\mathbb{R})$, $Q^f$ is uniformly continuous. That can be proved as follows. Suppose $\Delta_i\downarrow\Delta$ and $f(y)\leq M$, $y\in\mathbb{R}$. Since, for each $\lambda\in[0,1]$, 
$$\mu_{\Delta_i-\Delta}(\lambda)=\int_{\Delta_i-\Delta} f(\lambda-y)\,dy\leq M\int_{(\Delta_i-\Delta)\cap[-1,1]}dy$$
\noindent
we have that, for each $\psi\in L^2([0,1])$, $\|\psi\|^2=1$,
\begin{align*}
\langle\psi,Q^f(\Delta_i-\Delta)\psi\rangle=\int_{[0,1]}\mu_{\Delta_i-\Delta}(x)\,|\psi(x)|^2\,dx\leq M\int_{(\Delta_i-\Delta)\cap[-1,1]}dy
\end{align*}
\noindent
which proves the uniform continuity of $Q^f$.
\end{example}

\noindent
In the case of  uniformly continuous POVMs, we can prove a necessary condition for the norm-1-property.
\begin{definition}[\cite{Heinonen}]
A POVM $F$ has the norm-1-property if $\|F(\Delta)\|=1$, for each $\Delta\in\mathcal{B}(X)$ such that $F(\Delta)\neq\mathbf{0}$. 
\end{definition}
\begin{theorem}\label{norm1}
Let $F$ be uniformly continuous. Then, $F$ has the norm-1-property only if $\|F(\{\lambda\})\|\neq 0$ for each $\lambda\in\sigma(F)$.
\end{theorem}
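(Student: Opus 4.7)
The plan is to argue by contrapositive: assume there is a point $x\in\sigma(F)$ with $\|F(\{x\})\|=0$ (equivalently $F(\{x\})=\mathbf{0}$), and construct a Borel set $\Delta$ with $F(\Delta)\neq\mathbf{0}$ but $\|F(\Delta)\|<1$, thereby violating the norm-1-property.

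First I would exploit the hypotheses on $X$. Since $X$ is Hausdorff, locally compact and second countable, it is metrizable (as already recalled in the proof that POVMs on such spaces are regular). Fix a compatible metric $d$ and set $U_n:=B(x,1/n)$, so $\{U_n\}_{n\in\mathbb{N}}$ is a decreasing family of open neighbourhoods of $x$ with $\bigcap_n U_n=\{x\}$. Because $x\in\sigma(F)$, Definition \ref{spectrum} forces $F(U_n)\neq\mathbf{0}$ for every $n$.

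Next I would isolate the atom: write $U_n=\{x\}\cup(U_n\setminus\{x\})$ as a disjoint Borel union, so $F(U_n)=F(\{x\})+F(U_n\setminus\{x\})=F(U_n\setminus\{x\})$ under the assumption $F(\{x\})=\mathbf{0}$. The sets $\Delta_n:=U_n\setminus\{x\}$ satisfy $\Delta_n\downarrow\emptyset$, so Proposition \ref{down} (uniform continuity of $F$) gives
\begin{equation*}
\lim_{n\to\infty}\|F(U_n)\|=\lim_{n\to\infty}\|F(\Delta_n)\|=0.
\end{equation*}
In particular, there exists $\bar n$ such that $0<\|F(U_{\bar n})\|<1$. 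Since $F(U_{\bar n})\neq\mathbf{0}$, this contradicts the norm-1-property of $F$, completing the contrapositive.

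The argument is quite short and the only mild obstacle is producing the decreasing base of open neighbourhoods whose intersection is exactly $\{x\}$; this is immediate once metrizability is invoked, and metrizability in this setting has already been established earlier in the paper. Everything else is a direct combination of Definition \ref{spectrum}, finite additivity of $F$ on the disjoint decomposition $U_n=\{x\}\sqcup\Delta_n$, and Proposition \ref{down}.
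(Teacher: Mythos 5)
Your argument is correct and is essentially the paper's proof in contrapositive form: both rest on a decreasing family of open neighbourhoods of $x$ shrinking to $\{x\}$, the identity $F(U_n)=F(U_n\setminus\{x\})$ when $F(\{x\})=\mathbf{0}$, and Proposition \ref{down} applied to $U_n\setminus\{x\}\downarrow\emptyset$ to force $\|F(U_n)\|\to 0$ against the norm-1-property. The only cosmetic difference is that you obtain the shrinking neighbourhoods from metrizability while the paper invokes local compactness; both are available under the standing hypotheses on $X$.
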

\begin{proof}
We proceed by contradiction. Suppose that $F$ has the norm-1 property and that there exists $\lambda\in\sigma(F)$, such that $F(\{\lambda\})=\mathbf{0}$. Let $\Delta_i$ be a decreasing family of open sets such that, $\cap_{i=1}^\infty\Delta_i=\{\lambda\}$. The existence of such family is assured by the local compactness of $X$. 
Since $\lambda\in\sigma(F)$ and $\lambda\in\Delta_i$, we have $F(\Delta_i)\neq\mathbf{0}$ for any $i\in\mathbb{N}$ (see Definition \ref{spectrum}) and, by the norm-1 property, $\|F(\Delta_i)\|=1$. 
By the uniform continuity of $F$ and proposition \ref{down},
\begin{align*}
1=\lim_{i\to\infty}\|F(\Delta_i)\|&=\lim_{i\to\infty}\|F(\Delta_i)-F(\{\lambda\})+F(\{\lambda\})\|\\
&\leq\lim_{i\to\infty}\|F(\Delta_i-\{\lambda\})\|+\|F(\{\lambda\})\|=0.
\end{align*}

\end{proof}
\begin{example}
Let $Q^f$ be as in example \ref{example2}. Theorem \ref{norm1} implies that $Q^f$ cannot have the norm-1 property. Indeed, for each $\lambda\in\mathbb{R}$, and $\lambda_i\mapsto \lambda$, 
$$\big(Q^f(\{\lambda\})\psi\big)(x)=\lim_{i\to\infty}\big(Q^f([\lambda,\lambda_i))\psi\big)(x)=\lim_{i\to\infty}\mu_{[\lambda,\lambda_i)}(x)\psi(x)=0,$$ 
\noindent
for all $\psi\in L^2([0,1])$ and almost all $x\,\in [0,1]$.
\end{example}

\noindent
We refer to \cite{B12} for an analysis of the relevance of theorem \ref{norm1} to the problem of localization of massless relativistic particles.
\vskip1cm

\section{POVMs that are norm bounded by scalar measures}

\noindent
In the present section, we prove that a commutative POVM which is norm bounded by a scalar measure admits a strong Feller Markov kernel. Then, we apply the result to the case of the unsharp position observable. 
\begin{definition}\cite{Schroeck,Schroeck1}
A POVM $F:\mathcal{B}(X)\to\mathcal{F(H)}$ is norm bounded by a measure $\nu:\mathcal{B}(X)\to[0,1]$ if there exists a positive number $c$ such that $\| F(\Delta)\|\leq c\,\nu(\Delta)$, for each $\Delta\in\mathcal{B}(X)$.
\end{definition}
\begin{theorem}\label{abs}
Let $F$ be norm bounded by a finite measure $\nu$. Then, $F$ is uniformly continuous.
\end{theorem}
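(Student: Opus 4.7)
The plan is to reduce the statement to Proposition \ref{down}, which characterizes uniform continuity in terms of monotone convergence to $\emptyset$: $F$ is uniformly continuous if and only if $\|F(\Delta_i)\| \to 0$ whenever $\Delta_i \downarrow \emptyset$. So it suffices to verify this latter condition under the absolute continuity hypothesis.

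The argument then has just two ingredients. First, invoke the bound $\|F(\Delta_i)\| \leq c\,\nu(\Delta_i)$ provided by absolute continuity. Second, exploit the fact that $\nu$ is a \emph{finite} measure: by continuity from above of finite measures, any decreasing sequence of Borel sets $\Delta_i \downarrow \emptyset$ satisfies $\nu(\Delta_i)\to\nu(\emptyset)=0$. Combining these two facts yields $\|F(\Delta_i)\|\to 0$, which is exactly the criterion of Proposition \ref{down}.

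The proof is essentially immediate and I do not expect any genuine obstacle — the only point worth stressing is the finiteness of $\nu$, because continuity from above of a measure on a decreasing sequence requires some set in the sequence to have finite measure (here all of them do, since $\nu(X)<\infty$). Without this finiteness, the monotone convergence step in the argument could fail, so the hypothesis is used in an essential way.
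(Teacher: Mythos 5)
Your proof is correct and follows essentially the same route as the paper: apply the absolute-continuity bound $\|F(\Delta_i)\|\leq c\,\nu(\Delta_i)$, use continuity from above of the finite measure $\nu$ to get $\nu(\Delta_i)\to 0$ for $\Delta_i\downarrow\emptyset$, and conclude via Proposition \ref{down}. Your additional remark on why finiteness of $\nu$ is essential is accurate and matches the paper's later observation that the marginal position observable, absolutely continuous with respect to the (non-finite) Lebesgue measure, fails to be uniformly continuous.
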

\begin{proof}
Suppose $\Delta_i\downarrow\emptyset$. We have 
\begin{align*}
\lim_{i\to\infty}\|F(\Delta_i)\|\leq c\lim_{i\to\infty}\nu(\Delta_i)=0.
\end{align*} 
\noindent
Proposition \ref{down} ends the proof.
\end{proof}
\begin{corollary}
Let $F$ be norm bounded by a finite  measure $\nu$. Then, $F$ is commutative if and only if there exist a self-adjoint operator $A$ and a strong Feller Markov kernel $\mu:\sigma(A)\times\mathcal{B}(X)\to[0,1]$ such that:
\begin{equation}
F(\Delta)=\mu_{\Delta}(A),\quad\Delta\in\mathcal{B}(X)
\end{equation} 
\end{corollary}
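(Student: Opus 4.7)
The plan is to observe that this corollary is essentially a direct chain of three previously established results, so the proof reduces to combining them correctly. No new technical machinery should be required.

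For the forward implication, assume $F$ is commutative. By Theorem \ref{abs}, since $F$ is absolutely continuous with respect to the finite measure $\nu$, $F$ is uniformly continuous. Since $F$ is also commutative, Theorem \ref{uni} applies and yields a strong Feller Markov kernel $\mu:\sigma(A)\times\mathcal{B}(X)\to[0,1]$ together with a self-adjoint operator $A$ (for instance the sharp version of $F$ constructed in Theorem \ref{weak}) such that
\begin{equation*}
F(\Delta)=\int_{\sigma(A)}\mu_{\Delta}(\lambda)\,dE^A_{\lambda}=\mu_{\Delta}(A),\qquad \Delta\in\mathcal{B}(X),
\end{equation*}
which is exactly the claimed representation.

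For the converse implication, assume $F(\Delta)=\mu_{\Delta}(A)$ for some self-adjoint $A$ and some strong Feller Markov kernel $\mu$. A strong Feller Markov kernel is in particular a Markov kernel, so $(F,A,\mu)$ is a von Neumann triplet. By Theorem \ref{Cha} (equivalently, by Corollary \ref{smearing}), this is precisely the characterization of commutative POVMs as smearings of spectral measures, hence $F$ is commutative.

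I do not anticipate any genuine obstacle here: the absolute continuity hypothesis is used only to pass from ``commutative'' to ``uniformly continuous'' via Theorem \ref{abs}, and everything else is an application of Theorems \ref{uni} and \ref{Cha}. The only point worth stating cleanly is that the Markov kernel produced by Theorem \ref{uni} can be written as $\mu_{\Delta}(A)$ via the functional calculus associated with the spectral measure of $A$, so that the abstract equality $F(\Delta)=\int\mu_{\Delta}(\lambda)\,dE^A_{\lambda}$ coincides with the compact form $F(\Delta)=\mu_{\Delta}(A)$ that appears in the statement.
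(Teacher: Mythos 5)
Your proposal is correct and follows essentially the same route as the paper: Theorem \ref{abs} gives uniform continuity, Theorem \ref{uni} then yields the strong Feller Markov kernel representation, and the converse is the trivial direction via Theorem \ref{Cha}. The paper's own proof is just a terser version of exactly this chain.
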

\begin{proof}
By theorem \ref{abs}, $F$ is uniformly continuous. Then, theorem \ref{uni} implies the thesis. 
\end{proof}

\begin{example}\label{PositionExample3}
Let $Q^f$ be the unsharp position POVM defined in example \ref{example2}


Then, $Q^f$ is norm bounded by the measure 
$$\nu(\Delta)=M\int_{\Delta\cap[-1,1]}dy.$$ 
\noindent
Indeed, for each $\psi\in L^2([0,1])$, $\|\psi\|^2=1$,
\begin{align*}
\langle\psi,Q^f(\Delta)\psi\rangle=\int_{[0,1]}\mu_{\Delta}(x)\,\vert\psi(x)\vert^2\,dx\leq M\int_{\Delta\cap[-1,1]}dy
\end{align*}
\noindent
where, the inequality
$$\mu_{\Delta}(x)=\int_{\Delta} f(x-y)\,dy\leq M\int_{\Delta\cap[-1,1]} dy$$
\noindent
has been used.

\noindent
Therefore, by theorem \ref{abs}, $Q^f(\Delta)$ is uniformly continuous. 
\end{example}
\subsection{Unsharp Position Observable}\label{PositionMomentum}
\noindent
In the present subsection, we study an important kind of norm bounded POVMs, the unsharp position observables obtained as the marginals of a covariant phase space observable. 

\noindent
In the following $\mathcal{H}=L^2(\mathbb{R})$, $Q$ and $P$ denote position and momentum observables respectively and $\ast$ denotes convolution, i.e. $(f\ast g)(x)=\int f(y)g(x-y)d y$. 

\noindent
Let us consider the joint position-momentum POVM \cite{Ali,Bush,Davies,Guz,Holevo1,Prugovecki,Schroeck1,Stulpe} 
\begin{equation*} \label{phase}
F(\Delta\times\Delta')=\int_{\Delta\times\Delta'}U_{q,p}\,\gamma\,U^*_{q,p}\,d q\, d p
\end{equation*}
where, $U_{q,p}=e^{-iqP}e^{ipQ}$ and $\gamma=\vert f\rangle\langle f\vert$, $f\in L^2(\mathbb{R})$, $\|f\|_2=1$.
The marginal
\begin{equation}
\label{approximate}
Q^{f}(\Delta):=F(\Delta\times\mathbb{R})=\int_{-\infty}^{\infty}(\chi_{\Delta}\ast\vert f\vert^2)(\lambda)\,dQ_\lambda,\quad\Delta\in\mathcal{B}(\mathbb{R}),
\end{equation}
is an unsharp position observable.  Notice that the map $\mu_{\Delta}(\lambda):=(\chi_{\Delta}\ast\vert f\vert^2)(\lambda)$ defines a Markov kernel. 

\noindent
Moreover, $Q^f$ is norm bounded by the Lebesgue measure. Indeed,
\begin{align*} 
Q^f(\Delta)=F(\Delta\times\mathbb{R})&=\int_{\Delta\times\mathbb{R}}U_{q,p}\,\gamma\,U^*_{q,p}\,d q\, d p\\
&=\int_{\Delta}\,dq\int_{\mathbb{R}} U_{q,p}\,\gamma\,U^*_{q,p}\,d p\\
&=\int_{\Delta}\widehat{Q}(q)\,dq\leq\int_{\Delta}\mathbf{1}\,dq
\end{align*}

\noindent
where, 
$$\widehat{Q}(q)=\int_{\mathbb{R}}U_{q,p}\,\gamma\,U^*_{q,p}\,dp.$$
\noindent
Although $Q^f$ is norm bounded by the Lebesgue measure on $\mathbb{R}$, it is not uniformly continuous. That does not contradict theorem \ref{abs} since the Lebesgue measure on $\mathbb{R}$ is not finite. Anyway, $Q^f$ is uniformly continuous on each Borel set $\Delta$ with finite Lebesgue measure. 

\noindent
Now, we show that $Q^f$ is not in general uniformly continuous. We give the details of the following particular case.

\begin{example}[Optimal Phase Space Representation]\label{optimal}
\noindent
If we choose 
$$f^2(y)=\frac{1}{l\,\sqrt{2\,\pi}}\,e^{(-\frac{y^2}{2\,l^2})},\quad l\in\mathbb{R}-\{0\}.$$ 
\noindent
in (\ref{approximate}), we get an optimal phase space representation of quantum mechanics \cite{Prugovecki}. In this case,

\begin{align*}
Q^{f}(\Delta)&=\int_{-\infty}^{\infty}\Big(\int_{\Delta}\vert f(\lambda-y)\vert^2)\,d y\Big)\,dQ_\lambda\\
&=\frac{1}{l\,\sqrt{2\,\pi}}\int_{-\infty}^{\infty}\Big(\int_{\Delta}e^{-\frac{(\lambda-y)^2}{2\,l^2}}\,d y\Big)\,dQ_\lambda=\int_{-\infty}^{\infty}\mu_{\Delta}(\lambda)\,dQ_\lambda
\end{align*}
\noindent
where, 
\begin{equation}\label{PM}
\mu_{\Delta}(\lambda)=\frac{1}{l\,\sqrt{2\,\pi}}\int_{\Delta}e^{-\frac{(\lambda-y)^2}{2\,l^2}}\,d y 
\end{equation}
\noindent
defines a Markov kernel. 

In order to prove that $Q^f$ is not uniformly continuous we consider the family of sets $\Delta_i=(-\infty, a_i)$, $\lim_{i\to \infty}a_i=-\infty$ such that $\Delta_i\downarrow\emptyset$, and prove that $\lim_{i\to\infty}\|Q^f(\Delta_i)\|=1$. For each  $i\in\mathbb{N}$,
\begin{align*}
\lim_{\lambda\to-\infty}\mu_{\Delta_i}(\lambda)&=\lim_{\lambda\to-\infty}\frac{1}{l\,\sqrt{2\,\pi}}\int_{\Delta_i}e^{-\frac{(\lambda-y)^2}{2\,l^2}}\,d y\\
&=\lim_{\lambda\to-\infty}\frac{1}{l\,\sqrt{2\,\pi}}\int_{(-\infty,\, a_i-\lambda)}e^{-\frac{y^2}{2\,l^2}}\,d y=\frac{1}{l\,\sqrt{2\,\pi}}\int_{-\infty}^{\infty}e^{-\frac{y^2}{2\,l^2}}\,d y=1.  
\end{align*}
\noindent
Now, we prove that $\|Q^f(\Delta_i)\|=1$, $i\in\mathbb{N}$. Indeed, if 
$$\psi_n(x)=\chi_{[-n,-n+1]}(x),$$
\begin{align}\label{n}
\lim_{n\to\infty}\langle\psi_n,Q^f(\Delta_i)\psi_n\rangle&=\lim_{n\to\infty}\int_{-\infty}^{\infty}\mu_{\Delta_i}(x)\vert\psi_n(x)\vert^2\,dx\\
&=\lim_{n\to\infty}\int_{[-n,-n+1]}\mu_{\Delta_i}(x)\,dx=1.
\end{align} 
\noindent
Since, for each $\Delta\in\mathcal{B}(\mathbb{R})$, $\|Q^f(\Delta)\|\leq 1$, equation (\ref{n}) implies that $\|Q^f(\Delta_i)\|=1$, for each $i\in\mathbb{N}$. Hence,  $\lim_{i\to\infty}\|Q^f(\Delta_i)\|=1$ and $Q^f$ cannot be uniformly continuous.

It is worth noticing that although $Q^f$ is not uniformly continuous, $\mu_{\Delta}$ is continuous for each interval $\Delta\in\mathcal{B}(\mathbb{R})$. Indeed, 
\begin{align*}
\vert\mu_{\Delta}(\lambda)-\mu_{\Delta}(\lambda')\vert&=\frac{1}{l\,\sqrt{2\,\pi}}\Big\vert \int_{\Delta}e^{-\frac{(\lambda-y)^2}{2\,l^2}}\,dy-\int_{\Delta}e^{-\frac{(\lambda'-y)^2}{2\,l^2}}\,dy \Big\vert\\
&=\frac{1}{l\,\sqrt{2\,\pi}}\Big\vert\int_{\Delta_\lambda}e^{-\frac{(y)^2}{2\,l^2}}\,dy-\int_{\Delta_{\lambda'}} e^{-\frac{(y)^2}{2\,l^2}}\,dy\Big\vert\leq\frac{1}{l\,\sqrt{2\,\pi}}\Big\vert\int_{\overline{\Delta}} e^{-\frac{(y)^2}{2\,l^2}}\,dy\Big\vert
\end{align*}
\noindent
where, 
$$\Delta_\lambda=\{z\in\mathbb{R}\,\vert\,z=y-\lambda,\,y\in\Delta\},\quad\Delta_{\lambda'}=\{z\in\mathbb{R}\,\vert\,z=y-\lambda',\,y\in\Delta\}$$ 
\noindent
and,
$$\overline{\Delta}=(\Delta_\lambda-\Delta_{\lambda'})\cup(\Delta_{\lambda'}-\Delta_\lambda).$$

\noindent
Therefore, $\vert \lambda-\lambda'\vert\leq\epsilon$ implies,
\begin{equation*}
\vert\mu_{\Delta}(\lambda)-\mu_{\Delta}(\lambda')\vert\leq\frac{1}{l\,\sqrt{2\,\pi}}\Big\vert\int_{\overline{\Delta}} e^{-\frac{(y)^2}{2\,l^2}}\,dy\Big\vert\leq\frac{1}{l\,\sqrt{2\,\pi}}\,\int_{\overline{\Delta}}\,dy=\frac{\sqrt{2}}{l\,\sqrt{\pi}}\,\epsilon.
\end{equation*}
\end{example}

\section{Conclusions}
\noindent
We already pointed out that although the set of commutative POVMs is a particular subset of the set of POVMs, the commutative POVMs are relevant both from the mathematical and the physical viewpoint. It is well known that they can be interpreted as the smearing of real PVMs, $E$, and that the smearing can be realized by means of  Markov kernels, $\mu$. Anyway no characterization of the smearing (the Markov kernel) is known. In the present paper such a characterization is given and its mathematical and physical implications are analyzed. For example we answered the following questions: 1) Can the smearing be realized by means of a Feller Markov kernel?, 2) What can we say about the continuity of the functions $\mu_{\Delta}$?, 3) Can the smearing be realized by means of a strong Feller Markov kernel?, 4) What is the physical interpretation of the smearing when it is realized by means of a strong Feller Markov kernel?, 5) Is the smearing able to distinguish the points in the spectrum of  the PVM $E$?, 6) Are there physical examples that can be used as an illustration of items 1) to 5) above?   
In order to answer such questions, we had to provide a new and stronger characterization of a commutative POVM $F$ as the smearing of a real PVM $E$.


\appendix

\section{$\mathcal{A}^W(F)$ coincides with the von Neumann algebra generated by $\{F(\Delta)\}_{\Delta\in\mathcal{R(S)}}$}
\noindent
We recall that $\mathcal{S}\subset\mathcal{B}(X)$ is a countable basis for the topology of $X$ and $\mathcal{R(S)}$ is the ring generated by $\mathcal{S}$. 
\begin{proof}
\noindent
Let $M:=\{F(\Delta)\}_{\Delta\in\mathcal{B}(X)}$, and $\mathcal{A}^W(F)=\mathcal{A}^W(M)$ the von Neumann algebra generated by $F$. Let $G$ denote the family of open subsets of $X$ and $O:=\{F(\Delta),\,\,\Delta\in G\}$. We have $\mathcal{A}^W(O)\subset\mathcal{A}^W(F)$. Since the POVM $F$ is regular, $O$ is dense in $M$ and $\mathcal{A}^W(F)\subset\mathcal{A}^W(O)$. Therefore, 
\begin{equation}\label{O}
\mathcal{A}^W(F)=\mathcal{A}^W(M)=\mathcal{A}^W(O).
\end{equation}
\noindent
 Now, we prove that the von Neumann algebra $\mathcal{A}^W(O_1)$ generated by the set $O_1=\{F(\Delta)\}_{\Delta\in\mathcal{S}}$ coincides with $\mathcal{A}^W(O)$.

\noindent
For each open set $G$, there exists a family of sets $\{\Delta_i\}_{i\in\mathbb{N}}\subset \mathcal{S}$, such that $G=\cup_{i=1}^{\infty}\Delta_i$. Let $G_n=\cup_{i=1}^n\Delta_i$. Then, $G_n\uparrow G$ and 
\begin{align*}
F(G)=\lim_{n\to\infty}F(G_n)=\lim_{n\to\infty}F(\cup_{i=1}^n\Delta_i). 
\end{align*}
Since the von Neumann algebra generated by $O_1$ contains $F(\cup_{i=1}^n\Delta_i)$ for each $n\in\mathbb{N}$, it must contain $F(G)=\lim_{n\to\infty}F(\cup_{i=1}^n\Delta_i)$. Therefore, $\mathcal{A}^W(O)\subset\mathcal{A}^W(O_1)$. Moreover, $\mathcal{A}^W(O_1)\subset\mathcal{A}^W(O)$ since $O_1\subset O$. Then, $\mathcal{A}^W(O)=\mathcal{A}^W(O_1)$ and  by equations (\ref{O}),
\begin{equation}\label{W}
\mathcal{A}^W(O_1)=\mathcal{A}^W(O)=\mathcal{A}^W(F).
\end{equation}
\noindent
Since $\{F(\Delta)\}_{\Delta\in\mathcal{S}}\subset\{F(\Delta)\}_{\Delta\in\mathcal{R(S)}}\subset\{F(\Delta)\}_{\Delta\in\mathcal{B}(X)}$, the von Neumann algebra generated by the set $\{F(\Delta)\}_{\Delta\in\mathcal{R(S)}}$ must coincide with  $\mathcal{A}^W(F)$.

\end{proof}

\appendix

\section*{Appendix B: Sequences of continuous functions}
\renewcommand{\thetheorem}{B\arabic{theorem}}
\setcounter{theorem}{0}
\setcounter{equation}{0}
\noindent
The following theorem is due to Dini. We give a proof based on the use of sequences.
\begin{theorem}\label{lem7} 
Let $\{f_n(\lambda)\}_{n\in\mathbb{N}}$ be a non increasing sequence of continuous functions defined on a compact set $B\subset[0,1]$ with values in $[0,1]$ and such that $f_n(\lambda)\to 0$ point-wise. Then, $f_n(\lambda)\to 0$ uniformly.
\end{theorem}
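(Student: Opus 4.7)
The plan is to prove Dini's theorem by contradiction, using sequential compactness of $B$ in place of the usual open cover argument. Since $B$ is a compact subset of $[0,1]$, it is sequentially compact, which will let me extract convergent subsequences; the monotonicity and continuity hypotheses will then force a contradiction with pointwise convergence to $0$.

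Concretely, I would suppose that $f_n \to 0$ is not uniform on $B$. Negating the uniform convergence statement, there exists some $\epsilon > 0$, a strictly increasing sequence of indices $n_k$, and points $\lambda_k \in B$ such that
\begin{equation*}
f_{n_k}(\lambda_k) \geq \epsilon \quad \text{for every } k \in \mathbb{N}.
\end{equation*}
By sequential compactness of $B$, the sequence $\{\lambda_k\}$ admits a subsequence $\{\lambda_{k_j}\}$ converging to some point $\lambda_0 \in B$. I relabel this subsequence as $\{\lambda_k\}$ for notational simplicity, keeping the inequality $f_{n_k}(\lambda_k) \geq \epsilon$.

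Now fix an arbitrary $N \in \mathbb{N}$. Since $n_k \to \infty$, for all sufficiently large $k$ we have $n_k \geq N$, and the monotonicity hypothesis $f_{n_k} \leq f_N$ (pointwise, since the sequence is non-increasing) gives
\begin{equation*}
f_N(\lambda_k) \geq f_{n_k}(\lambda_k) \geq \epsilon.
\end{equation*}
Letting $k \to \infty$ and using the continuity of $f_N$ at $\lambda_0$, I obtain $f_N(\lambda_0) \geq \epsilon$. Since $N$ was arbitrary, this yields $f_N(\lambda_0) \geq \epsilon$ for every $N$, which contradicts the pointwise convergence $f_N(\lambda_0) \to 0$.

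The step I expect to require most care is the clean extraction of the subsequence and the bookkeeping between the index $n_k$ (position in the function sequence) and the index $k$ (position in the subsequence of points); once the monotonicity is invoked to replace $f_{n_k}(\lambda_k)$ by $f_N(\lambda_k)$ for a \emph{fixed} $N$, the continuity argument is routine. No other tool beyond sequential compactness of $B$, monotonicity of the sequence, and continuity of a single $f_N$ is needed, which is consistent with the author's announced preference for a sequence-based proof over the usual finite-subcover formulation.
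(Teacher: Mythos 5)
Your proof is correct. It follows the same general strategy as the paper's argument (proof by contradiction, extracting a convergent subsequence of points in the compact set $B$, then playing monotonicity against continuity and pointwise convergence), but the mechanism differs in a way worth noting. The paper works with the sup norms: it takes maximizers $\lambda_n$ with $f_n(\lambda_n)=\|f_n\|_\infty$, assumes $\|f_n\|_\infty\to a>0$, extracts $\lambda_{n_k}\to\beta$, and then needs a separate neighborhood-based sub-argument (continuity of some $f_{n_{\bar k}}$ on a neighborhood $I(\beta)$ together with monotonicity) to conclude $f_{n_k}(\beta)\geq a$ for all $k$, contradicting pointwise convergence at $\beta$. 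You instead negate uniform convergence directly, obtaining witnesses $f_{n_k}(\lambda_k)\geq\epsilon$ with $\lambda_k\to\lambda_0$, and then use the cleaner device of fixing a single index $N$: monotonicity gives $f_N(\lambda_k)\geq f_{n_k}(\lambda_k)\geq\epsilon$ for $n_k\geq N$, and sequential continuity of that one fixed $f_N$ yields $f_N(\lambda_0)\geq\epsilon$ for every $N$, contradicting $f_N(\lambda_0)\to 0$. Your version avoids introducing the limit $a$ of the sup norms, does not need the maximum to be attained, and replaces the paper's neighborhood argument by a one-line passage to the limit along the subsequence; the paper's version, in exchange, exhibits explicitly where the supremum sits and how it fails to decay. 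Both are complete proofs of Dini's theorem under the stated hypotheses.
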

\begin{proof}
Since $f_{n+1}(\lambda)\leq f_n(\lambda)$ for each $\lambda\in B$, we have $\|f_{n+1}\|_{\infty}\leq\|f_{n}\|_{\infty}$. If $\|f_{n}\|_{\infty}\to 0$ clearly $f_n(\lambda)\to 0$ uniformly.

\noindent 
Then, suppose $\|f_{n}\|_{\infty}\to a>0$. Since $\|f_{n+1}\|_{\infty}\leq\|f_{n}\|_{\infty}$, we have $\|f_{n}\|_{\infty}\geq a$, for each $n\in\mathbb{N}$.

\noindent
 Let $\lambda_n$ be such that $f_n(\lambda_n)=\|f_n\|_{\infty}$. Since $\{\lambda_n\}$ is a bounded sequence of real numbers, there exists a convergent subsequence $\{\lambda_{n_k}\}_{k\in\mathbb{N}}$. Let $\beta$ be its limit, i.e., $\beta:=\lim_{k\to\infty}\lambda_{n_k}$. The compactness of $B$ assures that $\beta\in B$. Moreover, $\lim_{k\to\infty}f_{n_k}(\lambda_{n_k})=a$. 

\noindent
Let us consider the sequence of numbers $f_{n_k}(\beta)$. We prove that $f_{n_k}(\beta)\geq a$ for each $k\in\mathbb{N}$. We proceed by contradiction. Suppose that there exists $\bar{k}\in\mathbb{N}$ such that $f_{n_{\bar{k}}}(\beta)<a$. Then, there exists a neighborhood $I(\beta)$ of $\beta$ such that $f_{n_{\bar{k}}}(\lambda)<a$ for each $\lambda\in I(\beta)$. Moreover, since $\lambda_{n_k}\to \beta$, there exists $l\in\mathbb{N}$ such that $k>l$ implies $\lambda_{n_k}\in I(\beta)$. Take $k>\max\{\bar{k},l\}$. Then, $\lambda_{n_k}\in I(\beta)$ and $f_{n_k}(\lambda)\leq f_{n_{\bar{k}}}(\lambda)$, for each $\lambda\in B$. Therefore,  
$$f_{n_k}(\lambda_{n_k})\leq f_{n_{\bar{k}}}(\lambda_{n_k})<a$$
which contradicts the fact that $f_{n_k}(\lambda_{n_k})=\|f_{n_k}\|_{\infty}\geq a$, for each $k\in\mathbb{N}$.

\noindent
We have proved that $f_{n_k}(\beta)\geq a$, for each $k\in\mathbb{N}$. This implies that $\lim_{k\to\infty}f_{n_k}(\beta)\geq a$ and contradicts one of the hypothesis of the lemma, i.e., $\lim_{n\to\infty}f_n(\lambda)=0$ for each $\lambda\in B$.
\end{proof}

\noindent
\textbf{Aknowledgements}:  The author is grateful to Prof. Paul Busch for his encouragement by confirming, in a private communication, the relevance of commutative POVMs in quantum mechanics and quantum information theory also in the light of his recent work on measurement uncertainty relations. The author is grateful to an anonymous referee for his helpful suggestions that improved the paper.   
The present work was performed under the auspices of G.N.F.M (Gruppo Nazionale di Fisica Matematica).


\end{document}